\newtheorem{theorem}{Theorem}[section]
\newtheorem{prop}[theorem]{Proposition}
\newtheorem {corollary}[theorem]{Corollary}
\newtheorem {lemma}[theorem]{Lemma}
\theoremstyle{definition}
\newtheorem {example}[theorem]{Example}
\theoremstyle{remark}
\newtheorem {rem}[theorem]{Remark}
\numberwithin{equation}{section}
\newcommand{\mr}{{\mathbb R}}
\newcommand{\mn}{{\mathbb N}}
\newcommand{\mc}{{\mathbb C}}
\newcommand{\md}{{\mathbb D}}
\renewcommand{\epsilon}{\varepsilon}
\newcommand{\eps}{\epsilon}
\newcommand{\ran}{\operatorname{ran}}
\newcommand{\hil}{\mathcal{H}}
\newcommand{\bdd}{\mathcal{B}}
\newcommand{\rank}{\operatorname{rank}}
\begin{document}
\title{Estimating the number of eigenvalues of linear operators on Banach spaces}
\author{M. Demuth\footnotemark[1], F. Hanauska\footnotemark[1], M. Hansmann\footnotemark[2], G. Katriel\footnotemark[3]}
\date{}
\maketitle

\renewcommand{\thefootnote}{\fnsymbol{footnote}}
\footnotetext[1]{Institute of Mathematics, Technical University of
Clausthal, Clausthal-Zellerfeld, Germany.}
\footnotetext[2]{Faculty of Mathematics, Chemnitz University of Technology,
Chemnitz, Germany.}
\footnotetext[3]{Department of Mathematics, ORT Braude College, Karmiel, Israel.}

\begin{abstract}
Let $L_0$ be a bounded operator on a Banach space, and consider a perturbation $L=L_0+K$, where $K$ is compact. This work is concerned with obtaining bounds
on the number of eigenvalues of $L$ in subsets of the complement of the essential spectrum of $L_0$, in terms of the approximation numbers of the perturbing operator $K$.
Our results can be considered as wide generalizations of classical results on the distribution of eigenvalues of compact operators, which correspond to the case $L_0=0$.
They also extend previous results on operators in Hilbert space. Our method employs complex analysis and a new finite-dimensional reduction, allowing us  
to avoid using the existing theory of determinants in Banach spaces, which would require strong restrictions on $K$. Several open questions regarding the sharpness of our
results are raised, and an example is constructed showing that there are some essential differences in the possible distribution of eigenvalues of operators in general 
Banach spaces, compared to the Hilbert space case. 
\end{abstract}

\section{Introduction}

The study of the distribution of eigenvalues of compact operators on a Banach space is a classical and well-developed subject (see, e.g., the monographs \cite{koenig} and \cite{pietsch}). Of primary concern is the problem of relating summability properties of some sequence of singular numbers (like the approximation numbers or Weyl-numbers) of a compact operator $L$ to the summability properties of its sequence of eigenvalues. For instance, a result of K\"onig \cite{MR0482266} (see also \cite{koenig}, Theorem 2.a.6), which generalizes the classical Weyl estimate for Hilbert space operators, says that 
\begin{equation*} 
 \sum_{j} |\lambda_j(L)|^p \leq 2(2e)^{p/2} \sum_{j} \alpha_j^p(L), \quad p>0,
\end{equation*}
where $\lambda_j(L)$ and $\alpha_j(L)$ denote the non-zero eigenvalues and the approximation numbers of $L$, respectively. An immediate consequence of this estimate is a bound on the number of eigenvalues $n_L(s)$  of $L$ {\it{outside}} the closed disk $B_s=\{\lambda\in \mc\;|\; |\lambda|\leq s\}$, namely 
\begin{equation}
  \label{eq:12}
n_L(s) \leq  \frac{2(2e)^{p/2}}{s^p}  \sum_{j} \alpha_j^p(L).  
\end{equation}

Our goal in the present paper is to prove bounds analogous to (\ref{eq:12}) for \emph{non-compact} operators $L=L_0+K$, where $L_0$ is a bounded operator and $K$ is a compact operator on a complex Banach space $X$. In such a case, Weyl's Theorem on preservation of the essential spectrum implies that  
$$\sigma_{ess}(L)=\sigma_{ess}(L_0)\subset \sigma(L_0)\subset B_{\|L_0\|},$$
so that, for any $s>\|L_0\|$, the part of the spectrum of $L_0$ outside $B_s$ consists of   
a finite number of eigenvalues of finite algebraic multiplicity. We wish to express this fact quantitatively by explicitly bounding the number of eigenvalues $n_L(s)$ in $B_s^c$. 
For example, one of our results is 
\begin{equation}\label{be}n_L(s) \leq C(p)\cdot  \frac{s}{(s-\|L_0\|)^{p+1}}  \sum_{j}\alpha_j^p(K) ,\qquad s>\|L_0\|,
\end{equation} 
see Corollary \ref{corrr}.  Note that in the very special case $L_0=0$ (so that $L=K$), (\ref{be}) reduces to the classical result (\ref{eq:12}), up to the value of a multiplicative constant.

While, as mentioned above, the distribution of eigenvalues of compact operators on Banach spaces is very well-studied, the same cannot be said for the type of generalization considered here. Indeed, essentially all results that we are aware of only concern the case where $L=L_0+K$ is a  Hilbert space operator - see, e.g., \cite{gohberg2, MR925418, MR2559715, demuth,  MR3054310, MR2463978} and references therein (This list is certainly quite incomplete. We only mention the classics and some more recent works. In particular, we only cite works about general non-selfadjoint operators). As we discuss below, the methods employed in the Hilbert space setting cannot be directly extended to Banach spaces, and therefore
some essentially new ideas are required, and these are developed here.

One of the key ideas used in \cite{MR2559715, demuth} is to identify the eigenvalues of $L$ with the zeros of a holomorphic function and then to use tools from complex analysis to obtain bounds on these zeros. This holomorphic function is defined in terms of some generalized determinants. 
In all cases, these generalized determinants can only be constructed given some summability assumptions on the approximation numbers of $K$. In the Banach space setting this method had been used in \cite{MR0298460} to study compact operators.

While we will pursue the same approach as mentioned in the previous paragraph, a key technical innovation of the present work is that we will {\it{not}} rely on the known determinant theory for Banach space operators (as developed, e.g., in \cite{pietsch} and \cite{MR568991}). Instead, we will use a finite-dimensional reduction argument to construct the required
holomorphic function, whose zeros in a certain domain $\Omega \subset \mc$ coincide with the eigenvalues of $L$ in this domain, using only (generalized) determinants of finite-rank operators. In this way we are able to avoid the strong assumptions on $K$ required for directly employing 
infinite-dimensional determinant theory. This enables us to obtain results in which the only assumption on $K$ is that it is approximable by finite rank operators, 
i.e. that its approximation numbers $\alpha_j(K)$ tend to zero (but not assuming anything about their summability). In particular, this means that our results are new even when specializing to the case of Hilbert space operators. 
When the approximation numbers are ($p$-)summable, our bounds take a particularly simple form, as in (\ref{be}) above.

The plan of this paper is as follows: In the next section we will gather some preliminary results concerning approximation numbers and determinants of finite rank operators. In Section 3 we will construct a holomorphic function whose zeros coincide with the eigenvalues of $L$, and in Section 4  we will prove our eigenvalue estimates. In the final Section 5 we will provide some remarks concerning the sharpness of our results, including a comparison with previously obtained results in Hilbert spaces and an example which shows 
that there is an essential difference in the distribution of eigenvalues between the Hilbert space case and the general Banach space case considered here. 

\section{Preliminaries} 
\subsection{Approximation numbers}

Let $(X,\|.\|_X)$ be a complex Banach space and let $\bdd(X)$ and $\mathcal{F}(X)$ denote the classes of bounded and finite rank operators on $X$, respectively. The operator norm of $L \in \bdd(X)$ will be denoted by $\|L\|$. We define the \emph{$n$th approximation number} of $L \in \bdd(X)$ as
\begin{align*} 
\alpha_n(L):=\inf\{\|L-F\|: F \in \mathcal{F}(X), \rank(F)< n\}, \quad n \in \mn.
\end{align*}
If $\alpha_n(L) \to 0$ for $n \to \infty$, then $L$ is a compact operator on $X$. On the other hand, in some Banach spaces not every compact operator can be approximated by finite rank operators, as has been shown by Enflo \cite{perenflo}.  
 
We recall the following properties of the approximation numbers (see, e.g., \cite{koenig} p. 69): For $K,L,M \in \bdd(X)$ and $n,m \in \mn$
\begin{enumerate}
\item[(i)] $\|L\|=\alpha_1(L)\geq \alpha_2(L) \geq \dots \geq 0$, 
\item[(ii)] $\alpha_{n+m-1}(K+L)\leq \alpha_n(K)+\alpha_m(L)$, 
\item[(iii)] $\alpha_{n}(KLM)\leq \|K\|\alpha_n(L)\|M\|$,
\item[(iv)] $\alpha_n(L)=0$ if $\rank(L) < n$. 
\end{enumerate}

Let $K \in \bdd(X)$ be a compact operator and let $\lambda_1(K),\lambda_2(K), \ldots$ denote its non-zero eigenvalues, ordered such that $|\lambda_1(K)| \geq |\lambda_2(K)| \geq \ldots>0$ and counted according to their algebraic multiplicity, where the algebraic multiplicity is defined as the rank of the Riesz projection of $K$ (see, e.g., \cite{gohberg1}) with respect to the considered eigenvalue.  The following estimate is due to K\"onig (see \cite{koenig}, Theorem 2.a.6): For $p \in (0,\infty)$ 
\begin{equation}
  \label{eq:7}
\sum_{j}|\lambda_j(K)|^p \leq 2(2e)^{p/2}\sum_{j}\alpha_j^p(K).  
\end{equation}

\subsection{Determinants of finite rank operators}

Let $F \in \mathcal{F}(X)$. For $n \in \mn$ the \emph{$n$-regularized determinant}  of $\mathds{1}-F$, where $\mathds{1}$ denotes the identity operator on $X$, is defined in terms of the (finite number of) eigenvalues of $F$, as follows:
\begin{eqnarray*}
  {\det}_n(\mathds{1}-F):= 
      \prod_{k} \left[ (1-\lambda_k(F))\exp\left( \sum_{j=1}^{n-1} \frac{\lambda_k^j(F)}{j} \right) \right].
\end{eqnarray*}
Here we use the standard convention that $\sum_{j=1}^{0}(\ldots):=0$. As references for regularized determinants we refer to \cite{MR1009163},\cite{gohberg2} and \cite{simon}, for the case of operators on a Hilbert space, and to \cite{MR1744872} for the general case. 

As a first simple but important property of regularized determinants let us note that  ${\det}_n(\mathds{1}-F) \neq 0$ iff $\mathds{1}-F$ is invertible in $\bdd(X)$. In the following, we will gather some less obvious properties. To this end, let us denote the extended complex plane by $\hat \mc$ and for a subspace $Y$ of $X$ let us set 
 $$\mathcal{F}(X;Y) := \{ F \in \mathcal{F}(X): \ran(F) \subset Y\}.$$
We note that $Y$ is an invariant subspace of $F \in \mathcal{F}(X;Y)$ and that the non-zero eigenvalues of $F$ and $F_Y$ (the restriction of $F$ to $Y$) coincide. In particular,
$$ {\det}_n(\mathds{1}-F)={\det}_n(\mathds{1}_Y-F_Y).$$
\begin{prop}\label{prop01} 
Let $G \subset \hat \mc$ be open and let $Y \subset X$ be a finite-dimensional subspace. Suppose that  $F(\lambda) \in \mathcal{F}(X;Y)$ for all $\lambda \in G$. Then the following holds: If $\lambda \mapsto F(\lambda) $ is analytic on $G$, then $\lambda \mapsto \det_n(\mathds{1}-F(\lambda))$ is analytic on $G$ as well. 
\end{prop}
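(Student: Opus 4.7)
The plan is to reduce the problem to the finite-dimensional situation on $Y$, where the $n$-regularized determinant becomes an explicit polynomial expression in the matrix entries composed with the exponential. The key observation, already noted in the excerpt, is that
$$\det_n(\mathds{1}-F(\lambda)) = \det_n(\mathds{1}_Y - F(\lambda)|_Y),$$
so it suffices to show that $\lambda \mapsto \det_n(\mathds{1}_Y - F(\lambda)|_Y)$ is analytic on $G$.

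First I would fix a basis $y_1,\dots,y_d$ of $Y$ (where $d = \dim Y$) and let $y_1^*,\dots,y_d^*$ be the corresponding dual basis of $Y^*$. Using Hahn--Banach, I would extend each $y_i^*$ to a bounded linear functional on the whole space $X$, still denoted by $y_i^*$. Since $F(\lambda)$ has range in $Y$, the matrix entries of $F(\lambda)|_Y$ with respect to the basis are given by
$$a_{ij}(\lambda) = y_i^*\bigl(F(\lambda)y_j\bigr), \qquad 1\le i,j\le d.$$
The map $F \mapsto y_i^*(Fy_j)$ is a continuous linear functional on $\bdd(X)$, so composing it with the operator-valued analytic map $\lambda\mapsto F(\lambda)$ yields a scalar analytic function on $G$. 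Hence each $a_{ij}$ is analytic on $G$.

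Second, I would invoke the standard identity, valid for any finite-rank operator $M$ on $Y$,
$$\det_n(\mathds{1}_Y - M) \;=\; \det(\mathds{1}_Y-M)\,\exp\!\left(\sum_{j=1}^{n-1}\frac{\tr(M^j)}{j}\right),$$
which is immediate from the definition together with $\sum_k \lambda_k^j(M) = \tr(M^j)$. The ordinary determinant and each trace $\tr(M^j)$ are polynomials in the matrix entries $a_{ij}$, and the exponential is an entire function. Therefore $\det_n(\mathds{1}_Y - F(\lambda)|_Y)$ is a composition of an entire function on $\mathbb{C}^{d\times d}$ with the analytic matrix-valued map $\lambda\mapsto (a_{ij}(\lambda))$, and is therefore analytic on $G$.

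There is no serious obstacle here; the only points requiring a small amount of care are the Hahn--Banach extension, which is needed because the coordinate functionals on $Y$ must be realized as continuous functionals on $X$ in order to combine with the $\bdd(X)$-valued analyticity of $F(\lambda)$, and the case $\infty \in G \subset \hat\mc$, which is handled by the usual chart $\lambda \mapsto 1/\lambda$: analyticity of $F$ at $\infty$ passes to the $a_{ij}$, and the same composition argument delivers analyticity of $\det_n(\mathds{1}-F(\lambda))$ at $\infty$.
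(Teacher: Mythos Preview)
Your proof is correct and takes a genuinely different route from the paper's. The paper also begins with the reduction $\det_n(\mathds{1}-F(\lambda))=\det_n(\mathds{1}_Y-F_Y(\lambda))$, but then, rather than computing directly, it equips $Y$ with a Hilbert space norm, conjugates $F_Y(\lambda)$ by the resulting isomorphism $J$, and invokes Simon's known result on the analyticity of regularized determinants for Hilbert space operators. Your argument is more self-contained: by passing to matrix entries and using the explicit factorization $\det_n(\mathds{1}_Y-M)=\det(\mathds{1}_Y-M)\exp\bigl(\sum_{j=1}^{n-1}\tr(M^j)/j\bigr)$, you reduce the question to the composition of an entire function of the matrix entries with the analytic scalar functions $a_{ij}$, avoiding any black-box reference. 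The paper's proof is shorter to write but imports a nontrivial external result; yours is elementary and stays entirely within finite-dimensional linear algebra. As a minor remark, the Hahn--Banach step is not strictly necessary: since $Y$ is closed and finite-dimensional, $\lambda\mapsto F(\lambda)y_j$ is already analytic as a $Y$-valued map, and one may apply the coordinate functionals $y_i^*\in Y^*$ directly.
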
 
\begin{proof}
We would like to use the fact that for Hilbert space operators the analyticity of the regularized determinant has been proven in \cite{MR0482328}. To this end, for every $\lambda \in G$ we denote by $F_Y(\lambda)$ the restriction of $F(\lambda)$ to $Y$. From the discussion preceeding the proposition we know that 
$ {\det}_n(\mathds{1}-F(\lambda))={\det}_n(\mathds{1}_Y-F_Y(\lambda))$. 
Choose a norm $\|.\|_Y$ on $Y$ such that $\hil=(Y,\|.\|_Y)$ is a Hilbert space and let $J : \hil \to (Y,\|.\|_X)$ denote the canonical isomorphism. Since the eigenvalues of $F_Y(\lambda)$ and $J^{-1}F_Y(\lambda)J$ coincide (including multiplicity), we obtain that $ {\det}_n(\mathds{1}_Y-F_Y(\lambda))= {\det}_n(\mathds{1}_\hil-J^{-1}F_Y(\lambda)J)$. It remains to note that $J^{-1}F_Y(\lambda)J \in \mathcal{F}(\hil)$ is analytic and of finite rank.

\end{proof} 
\begin{rem}
  The assumption that the ranges of all operators $F(\lambda)$ are contained in a single space $Y$ is certainly not necessary. However, it is sufficient for our purposes and, as we have seen above, it allows for a very easy proof. For completeness we should note that, without this assumption, the analyticity of $\lambda \mapsto {\det_1}(\mathds{1}-F(\lambda))$ has been shown in \cite{MR0417827}. 
\end{rem} 
\begin{prop}\label{prop02}
Let $p \in (0,\infty)$ and $F \in \mathcal{F}(X)$. Then there exists a constant $\Gamma_p$, depending only on $p$, such that 
\begin{equation}
  \label{eq:2}
  |{\det}_{\lceil p \rceil} (\mathds{1}-F)| \leq \exp\left( 2(2e)^{p/2} \Gamma_p \sum_{j} \alpha_j^p(F) \right),
\end{equation}
where $\lceil p \rceil = \min\{ n \in \mn: n \geq p\}$. 
\end{prop}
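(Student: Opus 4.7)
The plan is to reduce the claim to a pointwise bound on the Weierstrass primary factor and then combine this with K\"onig's eigenvalue/approximation-number inequality (\ref{eq:7}). Writing $k := \lceil p \rceil$ and $E_{k-1}(z) := (1-z)\exp\bigl(\sum_{j=1}^{k-1} z^j/j\bigr)$, the definition of the regularized determinant immediately gives
\begin{equation*}
\log\bigl|{\det}_k(\mathds{1}-F)\bigr| \;=\; \sum_j \log|E_{k-1}(\lambda_j(F))|.
\end{equation*}
Once I establish a pointwise inequality of the form $\log|E_{k-1}(z)|\leq \Gamma_p\,|z|^p$ on all of $\mc$, the proposition drops out by summing over $j$ and invoking (\ref{eq:7}) applied to $F$ (which is compact).

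The heart of the argument is therefore this pointwise estimate, which I would prove by splitting the plane into three regions. On the small disk $|z|\leq 1/2$, the identity $\log E_{k-1}(z) = -\sum_{j\geq k} z^j/j$ (the cancellation of the low-order terms of $\log(1-z)$ by the exponential correction is precisely what $E_{k-1}$ is designed for) yields $|\log E_{k-1}(z)|\leq 2|z|^k/k$, and since $|z|\leq 1$ and $k\geq p$ this gives $\log|E_{k-1}(z)|\leq C_1 |z|^p$. On the annulus $1/2\leq |z|\leq 1$, continuity of $E_{k-1}$ on the closed unit disk supplies a uniform upper bound $M_k$ for $\log|E_{k-1}(z)|$, and the inequality $|z|^p \geq (1/2)^p$ converts this into $\log|E_{k-1}(z)|\leq 2^p M_k |z|^p$. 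On the exterior $|z|\geq 1$, the crude bounds $|1-z|\leq 1+|z|$ and $\Re(z^j)\leq |z|^j$ give
\begin{equation*}
\log|E_{k-1}(z)| \;\leq\; \log(1+|z|) + \sum_{j=1}^{k-1}\frac{|z|^j}{j} \;\leq\; C_p\,|z|^{k-1},
\end{equation*}
and since $k-1 < p$ (because $k=\lceil p\rceil$) and $|z|\geq 1$ we have $|z|^{k-1}\leq |z|^p$. Taking $\Gamma_p$ to be the maximum of the three constants produced above yields the desired pointwise inequality.

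I do not anticipate a deep obstacle here. The only mild delicacy is matching the constants across the transitional radii $|z|=1/2$ and $|z|=1$, which is handled by the compactness of the intermediate annulus to absorb everything into a single constant depending only on $p$. With the pointwise bound in hand, the passage from $\sum_j|\lambda_j(F)|^p$ to $\sum_j \alpha_j^p(F)$ is supplied exactly by (\ref{eq:7}), which contributes the factor $2(2e)^{p/2}$ appearing in the exponent of the statement.
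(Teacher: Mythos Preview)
Your approach is exactly the paper's: it too reduces the estimate to the pointwise bound $|E_{\lceil p\rceil-1}(z)|\leq\exp(\Gamma_p|z|^p)$ on the Weierstrass factor (which the paper simply cites from Dunford--Schwartz rather than proving), multiplies over the eigenvalues, and then invokes K\"onig's inequality (\ref{eq:7}) to pass to approximation numbers.

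One small slip in your sketch of the pointwise bound: in the exterior region $|z|\geq 1$ you write
\[
\log(1+|z|)+\sum_{j=1}^{k-1}\frac{|z|^j}{j}\;\leq\;C_p\,|z|^{k-1},
\]
but this fails when $k=\lceil p\rceil=1$ (i.e.\ $0<p\leq 1$), since then the right-hand side is a constant while $\log(1+|z|)$ is unbounded. The fix is immediate: for $k=1$ simply bound $\log(1+|z|)\leq C_p|z|^p$ directly on $|z|\geq1$ (the function $t\mapsto\log(1+t)/t^p$ is continuous on $[1,\infty)$ and tends to $0$ at infinity, hence is bounded). With this adjustment your three-region argument goes through.
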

\begin{rem} 
 For upper and lower bounds on $\Gamma_p$ we refer to  \cite{MR2391269}. 
\end{rem}
\begin{proof}
There exists a constant $\Gamma_p>0$ such that for  $\lambda \in \mc$: 
$$ \left|(1-\lambda)\exp\left( \sum_{j=1}^{\lceil p \rceil-1} \frac{\lambda^j}{j} \right)\right| \leq \exp(\Gamma_p |\lambda|^p),$$
see \cite{MR1009163}, p.1107. This implies that 
$$   |{\det}_{\lceil p \rceil} (\mathds{1}-F)| \leq \exp\left( \Gamma_p \sum_{j} |\lambda_j(F)|^p \right).$$
Now apply estimate (\ref{eq:7}).
\end{proof}
We recall that the \emph{essential spectrum} of $L \in \bdd(X)$ is defined as 
\begin{equation}
  \label{eq:4}
  \sigma_{ess}(L)=\{ \lambda \in \mc : \lambda - L \text{ is not a Fredholm operator } \}.
\end{equation}
Here an operator is called Fredholm if it has closed range and both its kernel and cokernel are finite-dimensional.
Moreover, the \emph{discrete spectrum} of $L$, $\sigma_d(L)$, consists of all isolated eigenvalues of $L$ of finite algebraic multiplicity. The elements of the discrete spectrum will be called discrete eigenvalues. They can accumulate only at the essential spectrum.

By Weyl's theorem, the essential spectrum is invariant under a compact perturbation, so if $L_1 \in \bdd(X)$ and $L_2-L_1$ is compact, then $\sigma_{ess}(L_2)=\sigma_{ess}(L_1)$. The discrete spectra of $L_2$ and $L_1$ certainly need not coincide. In the following, assuming that the difference $L_2-L_1$ is of finite rank, we will identify the discrete eigenvalues of $L_2$ outside the spectrum of $L_1$ with the zeros of a certain holomorphic function. 
\begin{prop}\label{prop03}
Let $L_1,L_2 \in \bdd(X)$ and suppose that $L_2-L_1 \in \mathcal{F}(X)$. Let $U$ denote the unbounded component of $\mc \setminus \sigma(L_1)$. For $\lambda \in U$ and $p \in (0,\infty)$ define 
\begin{equation}
  \label{eq:6}
  d_{p}^{L_2,L_1}(\lambda):= {\det}_{\lceil p \rceil}(\mathds{1}-(L_2-L_1)(\lambda-L_1)^{-1}).
\end{equation}
Then the following hold: 
\begin{enumerate}
    \item[(i)] $d_{p}^{L_2,L_1}$ is analytic on $U$,    
   \item[(ii)] $ \log |d_{p}^{L_2,L_1}(\lambda)| \leq  2(2e)^{p/2} \Gamma_p \sum_k \alpha_k^p\big((L_2-L_1)(\lambda - L_1)^{-1}\big)$, where $\Gamma_p$ is as in Proposition \ref{prop02},
  \item[(iii)] $\lambda_0 \in U$ is a zero of $d_{p}^{L_2,L_1}$ of order $k$ if and only if it is a discrete eigenvalue of $L_2$ of algebraic multiplicity $k$.
\end{enumerate}
\end{prop}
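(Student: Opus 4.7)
The plan is to prove the three claims in order: (i) and (ii) are essentially corollaries of Propositions \ref{prop01} and \ref{prop02}, while (iii) is the substantive assertion. Setting $K := L_2 - L_1 \in \mathcal{F}(X)$, $Y := \ran(K)$ (finite-dimensional), and $F(\lambda) := K(\lambda - L_1)^{-1}$ for $\lambda \in U$, one has $F(\lambda) \in \mathcal{F}(X;Y)$ for every $\lambda \in U$, and $\lambda \mapsto F(\lambda)$ is analytic on $U$ because the resolvent of $L_1$ is. Part (i) then follows from Proposition \ref{prop01}, and part (ii) is the direct application of Proposition \ref{prop02} to $F(\lambda)$ at each fixed $\lambda$.

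For (iii), the factorization $\lambda - L_2 = (\mathds{1} - F(\lambda))(\lambda - L_1)$ shows that $\lambda - L_2$ is invertible iff $\mathds{1} - F(\lambda)$ is; combined with the non-vanishing criterion for regularized determinants of finite-rank operators recalled in the preliminaries, this yields $d_p^{L_2,L_1}(\lambda_0) = 0$ iff $\lambda_0 \in \sigma(L_2)$. Since $\sigma_{ess}(L_2) = \sigma_{ess}(L_1) \subset \sigma(L_1)$ is disjoint from $U$, every such $\lambda_0$ automatically lies in $\sigma_d(L_2)$, and it remains to match the order of the zero with the algebraic multiplicity.

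To match multiplicities, I fix $\lambda_0 \in U$ and pick a small positively oriented circle $\gamma \subset U$ around $\lambda_0$ that encloses no other point of $\sigma(L_2)$; by the argument principle, the order of the zero equals $\frac{1}{2\pi i}\oint_\gamma (d_p^{L_2,L_1})'(\lambda)/d_p^{L_2,L_1}(\lambda)\,d\lambda$. Passing to the finite-dimensional $Y$ via ${\det}_{\lceil p\rceil}(\mathds{1} - F(\lambda)) = {\det}_{\lceil p\rceil}(\mathds{1}_Y - F_Y(\lambda))$ together with the factorization ${\det}_n(\mathds{1}_Y - F_Y) = \det(\mathds{1}_Y - F_Y)\exp\bigl(\sum_{j=1}^{n-1}\tr(F_Y^j)/j\bigr)$, Jacobi's formula expresses the logarithmic derivative as $-\tr\bigl((\mathds{1}-F(\lambda))^{-1}F'(\lambda)\bigr)$ plus a term analytic on $U$ (whose contour integral is $0$). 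Substituting $F'(\lambda) = -K(\lambda-L_1)^{-2}$, using cyclicity of the trace, the relation $(\lambda - L_2)^{-1} = (\lambda - L_1)^{-1}(\mathds{1} - F(\lambda))^{-1}$ coming from the factorization, and the resolvent identity, one rewrites this trace as $\tr\bigl((\lambda - L_2)^{-1} - (\lambda - L_1)^{-1}\bigr)$. Exchanging trace with the contour integral and noting that $(\lambda - L_1)^{-1}$ is analytic across $\gamma$, the integral collapses to $\tr(P)$, where $P$ is the Riesz projection of $L_2$ at $\lambda_0$; since $P$ is a finite-rank projection, $\tr(P) = \rank(P)$, which by definition is the algebraic multiplicity.

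The main obstacle is handling the traces rigorously in the Banach-space setting, where the trace is defined only on finite-rank (or nuclear) operators and must be shown to commute with the contour integral. Both issues are resolved by cyclically permuting factors so that $K$ stands leftmost in each product under the trace: this forces the integrand to take values in operators with range in the fixed finite-dimensional subspace $Y$, thereby reducing every trace computation to linear algebra on $Y$ and making the exchange of trace and integral routine.
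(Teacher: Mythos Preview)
Your proof is correct and takes a genuinely different route from the paper for the multiplicity statement in (iii). For (i) and (ii) both you and the paper simply invoke Propositions \ref{prop01} and \ref{prop02}. For the matching of multiplicities, the paper proceeds \emph{algebraically}: it uses the Riesz projection $P$ of $L_2$ at $\lambda_0$ to split $L_2 = T + T^\perp$ with $T = L_2 P$, verifies the operator factorization
\[
\mathds{1} - (L_2-L_1)(\lambda-L_1)^{-1} = (\mathds{1} - \lambda^{-1}T)\bigl(\mathds{1} - (T^\perp - L_1)(\lambda - L_1)^{-1}\bigr),
\]
and then invokes the multiplicativity of regularized determinants (citing \cite{MR1744872}) to write $d_p^{L_2,L_1}(\lambda)$ as ${\det}_1(\mathds{1} - \lambda^{-1}T) = (1 - \lambda_0/\lambda)^{\rank P}$ times a nonvanishing analytic factor near $\lambda_0$. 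Your \emph{analytic} route via the argument principle, Jacobi's formula on the finite-dimensional space $Y$, and the identity $(d_p^{L_2,L_1})'/d_p^{L_2,L_1} = \tr\bigl((\lambda - L_2)^{-1} - (\lambda - L_1)^{-1}\bigr) + (\text{analytic})$ is equally valid and more self-contained, since it avoids the external multiplicativity result; the cost is the extra care you correctly take in justifying the trace--integral exchange by cycling $K$ to the left so that every operator under the trace has range in the fixed subspace $Y$. The paper's approach is shorter once the multiplicativity formula is granted and exhibits the zero explicitly as a power of a linear factor, whereas yours recovers the order only after the contour computation; on the other hand, your argument makes the link between the logarithmic derivative of the perturbation determinant and the resolvent difference transparent, which is a useful identity in its own right.
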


Following the terminology in \cite{gohberg2} we call $d_{p}^{L_2,L_1}$ the \emph{$p$th perturbation determinant} of $L_2$ by $L_1$.
\begin{proof}
 For $\lambda \in U$, we set $F(\lambda):=(L_2-L_1)(\lambda-L_1)^{-1}$. Then $F(\lambda)$ is analytic and of finite rank and $\ran(F(\lambda)) \subset \ran(L_2-L_1)$ for all $\lambda \in U$. The first two statements now follow from Proposition \ref{prop01} and \ref{prop02}, respectively. The third statement is well-known for Hilbert space operators (see, e.g., \cite{MR2662459}, Theorem 21). For completeness, let us sketch the proof of the general case.
 
  First of all, it is clear that $\lambda_0 \in U$ is a zero of $d_{p}^{L_2,L_1}$ iff $1$ is an eigenvalue of $F(\lambda_0)$. We now show that this is the case iff $\lambda_0$ is an eigenvalue of $L_2$. Indeed, if $1$ is an eigenvalue of $F(\lambda_0)$ then there exists
 $x\in X$, $x\neq 0$, with $F(\lambda_0)x=x$, that is $(L_2-L_1)(\lambda_0-L_1)^{-1}x=x$, so setting $y=(\lambda_0-L_1)^{-1}x$ we have
 $(L_2-L_1)y=(\lambda_0-L_1)y$, that is $L_2y=\lambda_0 y$, so $\lambda_0$ is an eigenvalue of $L_2$. Conversely, if $\lambda_0\in \mc\setminus \sigma(L_1)$ is an eigenvalue of $L_2$, then we have  $L_2y=\lambda_0 y$ for some $y\in X$, $y\neq 0$.  Thus setting $x=(\lambda_0-L_1)y$ we obtain
 $$F(\lambda_0)x=(L_2-L_1)(\lambda_0-L_1)^{-1}x=(L_2-L_1)y=(\lambda_0-L_1)y=x,$$
 so that indeed $1$ is an eigenvalue of $F(\lambda_0)$.
 
That all eigenvalues of $L_2$ in $U$ are discrete follows from the fact that the spectrum of $L_2$ in the unbounded component of $\mc \setminus \sigma_{ess}(L_2)$ is purely discrete  (see \cite{b_Davies}, Theorem 4.3.18) and from the fact that 
$$U \; \subset \; \mc \setminus \sigma(L_1) \; \subset \; \mc \setminus \sigma_{ess}(L_1) \;=\; \mc \setminus \sigma_{ess}(L_2),$$
which shows that $U$ is a subset of this unbounded component.

 It remains to show that the multiplicities of $\lambda_0$ as a zero of $d_{p}^{L_2,L_1}$ and as an eigenvalue of $L_2$ coincide. For that purpose, let us first note that it is no restriction to assume that $\lambda_0 \neq 0$. Now we denote the Riesz-Projection (see, e.g., \cite{gohberg1}) of $L_2$ with respect to $\lambda_0 \in \sigma_d(L_2) \cap U$ by $P$, and we set $T=L_2P$ and $T^\perp=L_2(\mathds{1}-P)$. Note that $T$ is of finite rank, with $\sigma(T)=\{\lambda_0\}$, and that $\lambda_0 \notin \sigma(T^\perp)$. In particular, there exists a ball $B$ around $\lambda_0$ such that $0 \notin B$ and such that $\lambda-L_1$ and $\lambda-T^\perp$ are invertible for all $\lambda \in B$. Now a short computation, using $TT^\perp=T^\perp T=0$ and $L_2=T+T^\perp$, shows that for $\lambda \in B$
$$ \mathds{1}-(L_2-L_1)(\lambda-L_1)^{-1} = (\mathds{1}-\lambda^{-1}T)(\mathds{1}-(T^\perp-L_1)(\lambda-L_1)^{-1}).$$
Hence, following \cite{MR1744872}, p.202, there exists a holomorphic function $C_{L_2,L_1,p}(\lambda)$ such that 
$$ d_{p}^{L_2,L_1}(\lambda) = {\det}_{1}(\mathds{1}-\lambda^{-1}T){\det}_{\lceil p \rceil}(\mathds{1}-(T^\perp-L_1)(\lambda-L_1)^{-1})\exp(C_{L_2,L_1,p}(\lambda)).$$
The operator $\mathds{1}-(T^\perp-L_1)(\lambda-L_1)^{-1}=(\lambda-T^\perp)(\lambda-L_1)^{-1}$ is invertible for $\lambda \in B$, so we see that the multiplicity of $\lambda_0$ as a zero of $d_{p}^{L_2,L_1}$ coincides with its multiplicity as a zero of $\lambda \mapsto {\det}_{1}(\mathds{1}-\lambda^{-1}T) = (1-\lambda^{-1}\lambda_0)^{\rank(P)}$. 
But the rank of $P$ coincides with the algebraic multiplicity of $\lambda_0$ as an eigenvalue of $L_2$.
\end{proof}

\section{Eigenvalues as zeros of an analytic function} 

Let $L_0 \in \bdd(X)$ and let $K$ be a compact operator on $X$. We assume that $K$ is the uniform limit of finite rank operators, i.e. 
\[ \lim_{n\rightarrow \infty}\alpha_n(K)=0.\]
In the following, we will be interested in the discrete spectrum of the operator 
\[ L:=L_0+K.\]
Let $\Omega \subset \hat{\mc}$ denote a connected, open set with $\infty \in \Omega$ and such that 
\begin{align}
\overline{\Omega}\cap \sigma(L_0)=\emptyset,\label{eq1.1a}
\end{align}
which implies that 
\[ S(\Omega):=\sup_{\lambda\in \Omega}\|(\lambda-L_0)^{-1}\|< \infty.\]
\begin{rem}
We note that the resolvent $R: \Omega \to \bdd(X)$, $R(\lambda)= (\lambda-L_0)^{-1}$, is analytic on $\Omega$ with $R(\infty)=0$.
\end{rem} 
Our final aim (see Section 4) is to prove an upper bound on the number of discrete eigenvalues of $L$ in $\Omega$. As a first step, we are going to relate the discrete eigenvalues of $L$ to the zeros of a certain holomorphic function, which can be estimated from above in terms of the approximation numbers of $K$. 

\begin{theorem} \label{prop1} Let $p \in (0,\infty)$ and $N\in\mathbb{N}_0$ such that $\alpha_{N+1}(K)<{1}/{S(\Omega)}$. Then there exists a bounded holomorphic function $d:\Omega\rightarrow \mathbb{C}$ (depending on $p,N,L$ and $L_0$) with the following properties: 
  \begin{enumerate} 
      \item[(i)] $d(\infty)=1$,
      \item[(ii)] for all $\lambda \in \Omega$ we have 
    \begin{small}
      \begin{align*}
|d(\lambda)|&\leq  \exp\left( \frac{C_p \|(\lambda - L_0)^{-1}\|^p }{\left(1-\alpha_{N+1}(K)\|(\lambda-L_0)^{-1}\|\right)^p}\sum_{j=1}^N\Big( \alpha_{N+1}(K) + \alpha_j(K) \Big)^p\right)\\
&\leq \exp\left( \frac{C_pS(\Omega)^p }{(1-\alpha_{N+1}(K)S(\Omega))^p}\sum_{j=1}^N\Big( \alpha_{N+1}(K) + \alpha_j(K) \Big)^p \right),
\end{align*}
\end{small}
where
\begin{equation}
  \label{eq:9}
C_p= 2(2e)^{p/2} \Gamma_p,  
\end{equation}
with $\Gamma_p$ as in Proposition \ref{prop02},
      \item[(iii)] $\lambda_0 \in \Omega$ is a zero of $d$ of order $m$ if and only if it is a discrete eigenvalue of $L$ of algebraic multiplicity $m$.
  \end{enumerate}
\end{theorem}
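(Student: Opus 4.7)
The plan is to use a finite-rank reduction so that only the determinant theory of finite-rank operators from Section~2.2 is needed. Given the hypothesis $\alpha_{N+1}(K) < 1/S(\Omega)$, choose $F \in \mathcal{F}(X)$ with $\rank(F) \leq N$ and $\|K - F\|$ so close to $\alpha_{N+1}(K)$ that still $\|K - F\|\, S(\Omega) < 1$. Set $R := K - F$ and $L_1 := L_0 + R$, so that $L - L_1 = F$ is of finite rank. Proposition~\ref{prop03} is then applicable to the pair $(L, L_1)$, and I would define
\[
d(\lambda) \;:=\; d_p^{L,L_1}(\lambda) \;=\; {\det}_{\lceil p \rceil}\bigl(\mathds{1} - F(\lambda - L_1)^{-1}\bigr),\qquad \lambda\in\Omega.
\]

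First I would check that $\Omega$ lies in the domain of definition of $d_p^{L,L_1}$, namely the unbounded component $U$ of $\mc\setminus\sigma(L_1)$. Factoring $\lambda - L_1 = (\lambda - L_0)\bigl(\mathds{1} - (\lambda-L_0)^{-1}R\bigr)$ and using a Neumann series, the condition $\|R\|\, S(\Omega) < 1$ gives both the invertibility of $\lambda - L_1$ on $\Omega$ and the bound
\[
\|(\lambda - L_1)^{-1}\| \;\leq\; \frac{\|(\lambda - L_0)^{-1}\|}{1 - \|R\|\,\|(\lambda - L_0)^{-1}\|},\qquad \lambda\in\Omega.
\]
Hence $\Omega\setminus\{\infty\}\subset U$, and analyticity at $\infty$ together with $d(\infty)=1$ follows by applying Proposition~\ref{prop01} in the variable $\mu = 1/\lambda$ to the analytic family $F(1/\mu - L_1)^{-1}$, whose ranges sit in the fixed finite-dimensional subspace $\ran(F)$ and which vanishes at $\mu=0$.

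For part (ii), I would combine Proposition~\ref{prop03}(ii) with the multiplicative property (iii) of approximation numbers and the rank bound $\rank(F(\lambda-L_1)^{-1})\leq N$ to obtain
\[
\alpha_k\bigl(F(\lambda - L_1)^{-1}\bigr) \;\leq\; \alpha_k(F)\,\|(\lambda - L_1)^{-1}\|,\quad k\leq N,
\]
while $\alpha_k(F(\lambda-L_1)^{-1}) = 0$ for $k > N$. The subadditivity property (ii) of approximation numbers gives $\alpha_k(F) = \alpha_k(K - R) \leq \alpha_k(K) + \|R\|$, and inserting the Neumann estimate above and letting $\|R\|$ approach $\alpha_{N+1}(K)$ produces the first displayed inequality of (ii); the second is immediate from $\|(\lambda - L_0)^{-1}\| \leq S(\Omega)$. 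Part (iii) is a direct application of Proposition~\ref{prop03}(iii) to the pair $(L,L_1)$.

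The main subtlety I expect is that the infimum defining $\alpha_{N+1}(K)$ may not be attained, so one cannot in general choose $F$ with $\|R\|$ exactly equal to $\alpha_{N+1}(K)$, while the bound stated in (ii) references $\alpha_{N+1}(K)$ itself. I would address this by picking a sequence $F_n$ of rank-$\leq N$ operators with $\|K - F_n\|\searrow \alpha_{N+1}(K)$, forming the corresponding determinants $d_n$ on $\Omega$, and observing that the bound in (ii) (with $\|K-F_n\|$ in place of $\alpha_{N+1}(K)$) is locally uniform. Montel's theorem then delivers a holomorphic limit $d$ satisfying the stated estimate, and since every $d_n$ has its zeros in $\Omega$ located exactly at the discrete eigenvalues of $L$ with their correct multiplicities, and since $d(\infty)=1$ prevents the limit from being identically zero, Hurwitz's theorem secures (iii) for $d$.
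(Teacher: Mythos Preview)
Your proposal is correct and follows essentially the same route as the paper. With $R=K-F$ and $L_1=L_0+R=L-F$, your perturbation determinant $d_p^{L,L_1}$ is exactly the paper's $d_F$; your Neumann-series control of $(\lambda-L_1)^{-1}$, the bound $\alpha_k(F)\le\alpha_k(K)+\|R\|$, and the passage to the limit via Montel and Hurwitz all mirror the paper's argument (the paper phrases the limiting step with a parameter $\eta=1/l\downarrow 0$, and bounds $\alpha_j(F(\lambda-L_0)^{-1})$ before multiplying by $\|[\mathds{1}-R(\lambda-L_0)^{-1}]^{-1}\|$ rather than going through $\alpha_j(F)\|(\lambda-L_1)^{-1}\|$, but these are cosmetic differences leading to the same inequality).
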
 
The proof of this theorem consists of two steps: First, we will use a finite dimensional reduction argument to construct a family of holomorphic functions  which satisfy point (i) and (iii) of the theorem (and which 'almost' satisfy estimate (ii)). In the second step, we will use an approximation argument involving Montel's theorem to construct the function $d$.  

To begin, let us fix $p> 0$ and let  $N\in\mathbb{N}_0$ be chosen such that $\alpha_{N+1}(K)< 1/{S(\Omega)}$. Then for $\eta \in (0,\frac{1}{S(\Omega)}-\alpha_{N+1}(K))$ there exists $F \in \mathcal{F}(X)$ of rank at most $N$ such that
\begin{align*}
 \|K-F\|<\alpha_{N+1}(K)+\eta
\end{align*}
and so for all $\lambda \in \Omega$ we can estimate
\begin{eqnarray*}
&& \|(K-F)(\lambda-L_0)^{-1}\| \leq \|K-F\|\|(\lambda-L_0)^{-1}\| \\
&\leq&  (\alpha_{N+1}(K)+\eta)\|(\lambda-L_0)^{-1}\| \leq (\alpha_{N+1}(K)+\eta)S(\Omega) 
< 1.  
\end{eqnarray*}
In particular, the operator $\mathds{1}-(K-F)(\lambda-L_0)^{-1}$ is invertible and
\begin{equation}
  \label{eq:5}
  \Big\| \big[\mathds{1}-(K-F)(\lambda-L_0)^{-1}\big]^{-1}\Big\| \leq \Big(1-(\alpha_{N+1}(K)+\eta)\|(\lambda-L_0)^{-1}\|\Big)^{-1}.
\end{equation}
Therefore, for $\lambda \in \Omega \setminus \{\infty\}$ the operator
\begin{equation}
  \label{eq:8}
  \lambda-(L-F)=[\mathds{1}-(K-F)(\lambda-L_0)^{-1}] (\lambda-L_0)
\end{equation}
is invertible, as the product of two invertible operators, and so $\Omega \subset \hat\mc \setminus \sigma(L-F)$. It follows that the perturbation determinant 
\begin{align}\label{defd}
 d_F(\lambda):= d_{p}^{L,L-F}(\lambda)={\det}_{\lceil p \rceil}(\mathds{1}-F[\lambda-(L-F)]^{-1})
\end{align}
is well-defined and analytic on $\Omega$, and we have $d_F(\infty)=1$. From Proposition  \ref{prop03} (with $L_2=L,L_1=L-F$) we further know that $\lambda_0 \in \Omega$ is a zero of $ d_{F}$ if and only if it is a discrete eigenvalue of $L$ of the same multiplicity. Proposition \ref{prop03} also implies that
\begin{equation*}
  |d_{F}(\lambda)| \leq \exp \left(2(2e)^{p/2} \Gamma_p \sum_{j=1}^N \alpha_j^p\Big(F[\lambda-(L-F)]^{-1}\Big) \right).
\end{equation*}
Let us estimate the approximation numbers on the right-hand side of the previous inequality: Using (\ref{eq:8}) and (\ref{eq:5}) we obtain
\begin{eqnarray*}
&& \alpha_j\Big(F[\lambda-(L-F)]^{-1}\Big)  \\
&=&  \alpha_j\Big(F(\lambda - L_0)^{-1}\big[\mathds{1}-(K-F)(\lambda-L_0)^{-1}\big]^{-1}\Big) \\
 &\leq& \alpha_j\Big(F(\lambda - L_0)^{-1}\Big)\Big\|\big[\mathds{1}-(K-F)(\lambda-L_0)^{-1}\big]^{-1}\Big\| \\
 &\leq& \frac{\alpha_j(F(\lambda - L_0)^{-1})}{1-(\alpha_{N+1}(K)+\eta)\|(\lambda-L_0)^{-1}\|}.  \end{eqnarray*}
We continue, using that $\alpha_j(A+B) \leq  \alpha_j(A) + \|B\|$,
\begin{eqnarray*} 
\alpha_j\Big(F(\lambda - L_0)^{-1}\Big)&=& \alpha_j\Big((F-K)(\lambda - L_0)^{-1}+ K(\lambda -L_0)^{-1}\Big)\\
 &\leq& \|(F-K)(\lambda - L_0)^{-1}\|+ \alpha_j\Big(K(\lambda -L_0)^{-1}\Big)\\
 &\leq&  \|(\lambda - L_0)^{-1}\| \left( \|F-K\| + \alpha_j(K) \right) \\
 &\leq&  \|(\lambda - L_0)^{-1}\| \left( \alpha_{N+1}(K)+\eta + \alpha_j(K) \right).
\end{eqnarray*}
Therefore
$$\alpha_j\Big(F[\lambda-(L-F)]^{-1}\Big)  \leq \frac{\|(\lambda - L_0)^{-1}\| \left( \alpha_{N+1}(K)+\eta + \alpha_j(K) \right)}{1-(\alpha_{N+1}(K)+\eta)\|(\lambda-L_0)^{-1}\|},$$
and so
\begin{small}
$$\sum_{j=1}^N\alpha_j^p\Big(F[\lambda-(L-F)]^{-1}\Big)  \leq \frac{\|(\lambda - L_0)^{-1}\|^p \sum_{j=1}^N\Big( \alpha_{N+1}(K)+\eta + \alpha_j(K) \Big)^p}{\left(1-(\alpha_{N+1}(K)+\eta)\|(\lambda-L_0)^{-1}\|\right)^p}.$$
\end{small}
Finally,
we obtain the following upper bound on the function $d_F$: For all $\lambda \in \Omega$ 
\begin{small}
\begin{eqnarray}\label{bnd}
|d_{F}(\lambda)| \leq \exp\left(2(2e)^{p/2} \Gamma_p \frac{\|(\lambda - L_0)^{-1}\|^p \sum_{j=1}^N\Big( \alpha_{N+1}(K)+\eta + \alpha_j(K) \Big)^p}{\left(1-(\alpha_{N+1}(K)+\eta)\|(\lambda-L_0)^{-1}\|\right)^p}\right).\nonumber\\
\end{eqnarray}
\end{small}
Let us collect all our results up to this point in the following lemma. 
\begin{lemma}\label{lemma1.1} 
Let $N\in\mathbb{N}_0$ be such that $\alpha_{N+1}(K)< 1 / S(\Omega)$ and fix some $ \eta \in (0,\frac{1}{S(\Omega)}-\alpha_{N+1}(K))$. Then there exists $F$ of rank at most $N$ such that the holomorphic function $d_{F}: \Omega\rightarrow \mathbb{C}$ defined by (\ref{defd}) satisfies (\ref{bnd}) and $d_{F}(\infty)=1$. In addition,  $\lambda_0 \in \Omega$ is a zero of $d_{F}$ of order $m$ if and only if it is a discrete eigenvalue of $L$ of algebraic multiplicity $m$.
\end{lemma}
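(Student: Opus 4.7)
The plan is to consolidate the chain of facts developed in the paragraphs immediately preceding the statement into a single, self-contained conclusion; essentially no new argument is required, only careful bookkeeping of what is already on the table.

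The first step is to produce the operator $F$. Using the very definition of $\alpha_{N+1}(K)$ together with the assumption $\eta < 1/S(\Omega) - \alpha_{N+1}(K)$, I would pick $F \in \mathcal{F}(X)$ of rank at most $N$ with $\|K - F\| < \alpha_{N+1}(K) + \eta$. The crucial intermediate fact is then that $\lambda - (L-F)$ is invertible for every $\lambda \in \Omega \setminus \{\infty\}$: via the factorisation \eqref{eq:8}, the right-hand factor $\lambda - L_0$ is invertible by \eqref{eq1.1a}, while the left-hand factor $\mathds{1} - (K - F)(\lambda - L_0)^{-1}$ is invertible by a Neumann series since
\[ \|(K - F)(\lambda - L_0)^{-1}\| \leq (\alpha_{N+1}(K) + \eta) S(\Omega) < 1, \]
which also produces the explicit estimate \eqref{eq:5} on the inverse, the quantitative backbone of the argument.

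The second step is to define $d_F = d_p^{L, L-F}$ as in \eqref{defd} and invoke Proposition~\ref{prop03} with $L_1 = L-F$, $L_2 = L$. Part~(i) yields analyticity on $\Omega \setminus \{\infty\}$; analyticity at $\infty$ together with the normalisation $d_F(\infty) = 1$ follow from Proposition~\ref{prop01} applied on a neighbourhood of $\infty$, since the operator-valued function $F[\lambda - (L-F)]^{-1}$ has range contained in the finite-dimensional space $\ran(F)$ and extends analytically to $\infty$ with value $0$ (via the Laurent expansion of $[\lambda - (L-F)]^{-1}$ in $1/\lambda$), and $\det_{\lceil p \rceil}(\mathds{1}) = 1$. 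Part~(iii) yields the zero/eigenvalue correspondence verbatim.

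The third step is the quantitative bound \eqref{bnd}. Proposition~\ref{prop03}(ii) reduces this to estimating each $\alpha_j(F[\lambda - (L-F)]^{-1})$, and the chain I would use is: factor via \eqref{eq:8}, pull out the inverse using property~(iii) of approximation numbers together with \eqref{eq:5}, then apply subadditivity $\alpha_j(A+B) \leq \alpha_j(A) + \|B\|$ to the splitting $F(\lambda - L_0)^{-1} = (F - K)(\lambda - L_0)^{-1} + K(\lambda - L_0)^{-1}$, and finally insert $\|F - K\| < \alpha_{N+1}(K) + \eta$. Raising to the $p$-th power and summing from $j = 1$ to $N$ --- the truncation being justified because $\rank(F[\lambda - (L-F)]^{-1}) \leq \rank(F) \leq N$, so property~(iv) of approximation numbers kills the tail --- produces exactly \eqref{bnd}. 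There is no genuine technical obstacle in this lemma; the only point deserving attention is uniformity in $\lambda \in \Omega$, but every constant in the chain depends only on $N$, $\eta$, $S(\Omega)$ and the approximation numbers of $K$, so uniformity is automatic.
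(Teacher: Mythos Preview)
Your proposal is correct and follows essentially the same route as the paper, which in fact presents Lemma~\ref{lemma1.1} merely as a summary of the preceding development: choose $F$ from the definition of $\alpha_{N+1}(K)$, establish invertibility of $\lambda-(L-F)$ via the factorisation \eqref{eq:8} and the Neumann estimate \eqref{eq:5}, invoke Proposition~\ref{prop03} for analyticity and the zero/eigenvalue correspondence, and then estimate the approximation numbers exactly as you describe. Your treatment of analyticity at $\infty$ is slightly more explicit than the paper's (which simply asserts $d_F(\infty)=1$), but this is a cosmetic difference only.
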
 
We conclude the proof of Theorem \ref{prop1} with the following limiting argument: Choose $N_0 \in \mn$ such that $\alpha_{N_0+1}(K)< 1 / S(\Omega)$. Let  $l_0\in\mathbb{N}$ denote the smallest integer such that $\frac{1}{l_0}<\frac{1}{S(\Omega)}-\alpha_{N_0+1}(K)$. Then by the previous lemma for every $l\geq l_0$ there exists an operator $F_l$ of rank at most $N_0$ such that the holomorphic function $d_{F_l}$ on $\Omega$ defined by (\ref{defd}) satisfies 
\begin{eqnarray*} 
&&  |d_{F_l}(\lambda)| \\
&\leq&  \exp\left(2(2e)^{p/2} \Gamma_p \frac{\|(\lambda - L_0)^{-1}\|^p \sum_{j=1}^N\Big( \alpha_{N+1}(K)+\frac{1}{l} + \alpha_j(K) \Big)^p}{\left(1-(\alpha_{N+1}(K)+\frac{1}{l})\|(\lambda-L_0)^{-1}\|\right)^p}\right) \\
&\leq&  \exp\left(2(2e)^{p/2} \Gamma_p \frac{S(\Omega)^p \sum_{j=1}^N\Big( \alpha_{N+1}(K)+\frac{1}{l_0} + \alpha_j(K) \Big)^p}{\left(1-(\alpha_{N+1}(K)+\frac{1}{l_0})S(\Omega)\right)^p}\right)
\end{eqnarray*}
for all $\lambda\in \Omega$. The right-hand side of this inequality is a uniform bound for the sequence of holomorphic functions $\big(d_{F_l}\big)_{l\geq l_0}$. Using Montel's Theorem (see e.g. \cite{rudin}, Theorem 14.6), there exists a locally uniformly convergent subsequence. Calling the local uniform limit of this subsequence $d$, let us check that this function satisfies all conditions of Theorem \ref{prop1}: First of all, it is clear that $d$ satisfies the estimate stated under point (ii). This uniform bound on $d$ also implies that $d$ is holomorphic at infinity. The local uniform convergence of $d_{F_l}$ and the fact that $d_{F_l}(\infty)=1$ imply that also $d(\infty)=1$. Finally, Hurwitz' theorem (see, e.g., \cite{MR503901}) and the fact that the zero sets of all functions $d_{F_l}$ coincide with $\sigma_d(L)\cap \Omega$ imply the assertion concerning the zero set of $d$. This completes the proof of Theorem \ref{prop1}. 
\begin{rem} 	
We note that if the approximation numbers $\{\alpha_j(K)\}$ are $p$-summable for some $p\in(0,\infty)$, then we can do without assumption (\ref{eq1.1a})  and prove that there exists a holomorphic function $\tilde{d}:  U \rightarrow \mathbb{C}$, defined on the \emph{entire} unbounded component $U$ of $\mc \setminus \sigma(L_0)$,  which satisfies points (i) and (iii) of Theorem \ref{prop1} and the inequality 
\begin{align*} 
\log |\tilde{d}(\lambda)|\leq C_p \|(L_0-\lambda)^{-1}\|^p\sum_{j=1}^\infty \alpha_j^p(K), \qquad \lambda \in U.
\end{align*} 
We are not going to use this result in the present paper, so let us just provide a rough sketch of proof: First, one approximates the set $U$ with sets $U_n$ which satisfy (\ref{eq1.1a}), then one applies Theorem \ref{prop1} to obtain a sequence of holomorphic functions $d_n$ defined on $U_n$ and finally one uses Hurwitz' theorem (twice) to obtain the desired function $\tilde{d}$. 
\end{rem}

\section{Estimating the number of eigenvalues}

We repeat our assumptions from the previous section: Let $L_0 \in \bdd(X)$ and let $K$ be a compact operator on $X$, which is the uniform limit of finite rank operators. Set $L:=L_0+K$. We wish to estimate the number of eigenvalues of $L$ in a domain $\Omega$ which is `away' from the spectrum of $L_0$. To quantify the notion of `away' we 
recall the definition of the $\epsilon$-pseudospectrum of a linear operator $L_0$: 
\begin{equation}
\sigma_{\eps}(L_0) := \{ \lambda \in \mc : \|(\lambda-L_0)^{-1}\| \geq \eps^{-1}\}. 
\end{equation}

In this section we will assume that
$\Omega \subset \hat\mc$ is an open and \emph{simply connected} set satisfying $\infty \in \Omega$, with 
\begin{align}\label{pss}
\overline{\Omega}\cap \sigma_\eps(L_0)=\emptyset,
\end{align}
for some $\epsilon>0$.
We note that (\ref{pss}) is just another way to express the condition
\begin{align}
S(\Omega):=\sup_{\lambda\in \Omega}\|(\lambda-L_0)^{-1}\|< \frac{1}{\epsilon}. \label{fin}
\end{align}
The $\eps$-pseudospectrum of linear operators has been studied extensively in the last two decades, both from an analytical and a numerical perspective, see the monographs \cite{MR2155029} and \cite{b_Davies} and references therein. 

 Our general result will provide estimates on the number $\mathcal{N}_L(\Omega')$ of discrete eigenvalues of $L$ (counting algebraic multiplicity) 
 in subsets $\Omega' \subset \Omega$.
We denote by $ \phi :\Omega \to \md $ a conformal mapping of $\Omega$ to the open unit disk $\md$, which satisfies $\phi(\infty)=0$, whose existence is assured by Riemann's Mapping Theorem. We define 
$$r_{\Omega}(\Omega'):=\sup_{z\in \Omega'}|\phi(z)|.$$ 
Note that $0 \leq r_{\Omega}(\Omega')\leq 1$, that $r_\Omega(\Omega')=0$ iff $\Omega'=\{\infty\}$, and that the values of  $r_\Omega$ do not depend on the choice of the conformal mapping $\phi$, since all such mappings differ only by a multiplicative constant of norm $1$. 

\begin{theorem}\label{thm1}
Let $p \in (0,\infty)$ and let $\Omega \subset \hat{\mc}$ be open and simply connected with $\infty \in \Omega$. Moreover, suppose that $\Omega$ satisfies (\ref{pss}) for some $\eps > 0$. Then for any $\Omega'\subset \Omega$ with $0<r_{\Omega}(\Omega')<1$ the following holds:  
  \begin{enumerate}
      \item[(i)] If $N\in\mathbb{N}_0$ is such that $\alpha_{N+1}(K)<\epsilon $, then 
\begin{equation*}
  \mathcal{N}_L(\Omega') \leq \frac{C_p}{(\epsilon-\alpha_{N+1}(K))^p \log \left(\frac 1 {r_{\Omega}(\Omega')}\right)} \sum_{j=1}^N\Big( \alpha_{N+1}(K) + \alpha_j(K) \Big)^p.
  \end{equation*}
\item[(ii)] If $\{\alpha_j(K)\} \in l^p(\mn)$, then 
\begin{equation*}
  \mathcal{N}_L(\Omega') \leq \frac{C_p}{\eps ^p \log \left(\frac 1 {r_{\Omega}(\Omega')}\right)}  \sum_{j=1}^\infty \alpha_j^p(K)  .
  \end{equation*}
  \end{enumerate}
In both cases, $C_p$ is as defined in (\ref{eq:9}).
\end{theorem}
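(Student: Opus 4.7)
The plan is to turn the hypothesis of Theorem \ref{prop1} into a zero-counting problem on the unit disk. Choose $N$ as in part (i) (so that the standing hypothesis $\alpha_{N+1}(K) < 1/S(\Omega)$ of Theorem \ref{prop1} is automatic from $\alpha_{N+1}(K)<\eps$ and $S(\Omega)<1/\eps$) and let $d\colon\Omega\to\mathbb{C}$ be the holomorphic function provided by Theorem \ref{prop1}. Push it to the disk by setting $f:=d\circ\phi^{-1}\colon\md\to\mathbb{C}$. Since $\phi(\infty)=0$, property (i) of Theorem \ref{prop1} gives $f(0)=1$, while property (ii) furnishes the uniform bound $\|f\|_\infty\le M$ with
\[
\log M \;\le\; \frac{C_p\, S(\Omega)^p}{\bigl(1-\alpha_{N+1}(K)S(\Omega)\bigr)^p}\sum_{j=1}^N\bigl(\alpha_{N+1}(K)+\alpha_j(K)\bigr)^p.
\]
Because $\phi$ is a biholomorphism, property (iii) of Theorem \ref{prop1} transports multiplicities, so the zeros of $f$ in the closed disk $\{|z|\le r_\Omega(\Omega')\}$ contain all the discrete eigenvalues of $L$ in $\Omega'$ (counted with algebraic multiplicity); this already gives an upper bound on $\mathcal{N}_L(\Omega')$ by the zero count of $f$.

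Next I would invoke Jensen's formula. Fix $r\in(r_\Omega(\Omega'),1)$ avoiding zeros of $f$ on $\{|z|=r\}$, and let $w_1,\dots,w_n$ be the zeros of $f$ with $|w_k|\le r_\Omega(\Omega')$. From $f(0)=1$ and $|f|\le M$ on $\md$,
\[
\mathcal{N}_L(\Omega')\,\log\frac{r}{r_\Omega(\Omega')}\;\le\;\sum_{k=1}^n\log\frac{r}{|w_k|}\;\le\;\frac{1}{2\pi}\int_0^{2\pi}\log|f(re^{i\theta})|\,d\theta\;\le\;\log M.
\]
Letting $r\to 1^-$ yields
\[
\mathcal{N}_L(\Omega')\,\log\frac{1}{r_\Omega(\Omega')}\;\le\;\log M.
\]

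For part (i) it then remains to simplify $\log M$. The assumption (\ref{fin}), $\eps S(\Omega)<1$, is algebraically equivalent to the inequality
\[
\frac{S(\Omega)}{1-\alpha_{N+1}(K)S(\Omega)}\;<\;\frac{1}{\eps-\alpha_{N+1}(K)},
\]
valid whenever $\alpha_{N+1}(K)<\eps$; raising to the $p$-th power and plugging into the bound above gives exactly the inequality in (i). For part (ii), the summability hypothesis forces $\alpha_{N+1}(K)\to 0$, so $\eps-\alpha_{N+1}(K)\to\eps$. Moreover, for $j\le N$ one has $\alpha_j(K)\ge\alpha_{N+1}(K)$, hence $(\alpha_{N+1}(K)+\alpha_j(K))^p\le 2^p\alpha_j(K)^p$, so dominated convergence on $(\mathbb{N},\#)$ applied to the functions $j\mapsto(\alpha_{N+1}(K)+\alpha_j(K))^p\mathds{1}_{\{j\le N\}}$ yields
\[
\lim_{N\to\infty}\sum_{j=1}^N\bigl(\alpha_{N+1}(K)+\alpha_j(K)\bigr)^p \;=\;\sum_{j=1}^\infty \alpha_j^p(K).
\]
Taking $N\to\infty$ in (i) thus gives (ii).

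The substantive work has already been done in Theorem \ref{prop1}; the only genuinely delicate point here is verifying the algebraic swap $S(\Omega)^p/(1-\alpha_{N+1}(K)S(\Omega))^p \rightsquigarrow 1/(\eps-\alpha_{N+1}(K))^p$, since this is precisely where the pseudospectral hypothesis is consumed, and it is this swap that lets the passage $N\to\infty$ in (ii) produce a clean constant $1/\eps^p$ rather than a $\sup$-dependent one.
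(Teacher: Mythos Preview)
Your proof is correct and follows essentially the same route as the paper: transport $d$ to the unit disk via $\phi^{-1}$, apply Jensen to bound the zero count by $\log\|f\|_\infty$, and then invoke the estimate from Theorem \ref{prop1}(ii) together with $S(\Omega)<1/\eps$. The only notable difference is in the passage $N\to\infty$ for part (ii): the paper splits into the cases $p<1$ (subadditivity) and $p\ge 1$ (Minkowski) and separately argues that $N\alpha_{N+1}^p(K)\to 0$, whereas your dominated-convergence argument with the pointwise bound $(\alpha_{N+1}(K)+\alpha_j(K))^p\le 2^p\alpha_j(K)^p$ for $j\le N$ handles all $p>0$ uniformly and is somewhat cleaner.
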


\begin{proof}
Note that from Jensen's identity (see, e.g., \cite{rudin}) we know that for a bounded holomorphic function $h$ on $\md$ with $|h(0)|=1$ we have 
  \begin{equation*}
\int_0^1 \frac{n(h;s)}{s} ds \leq \log \|h\|_\infty,
  \end{equation*}
where $n(h;s)$ denotes the number of zeros of $h$ in $B_s$.
 From here we can deduce that for $0<r<1$
\begin{equation}\label{jl}n(h;r) \log \frac 1 r = \int_r^1 \frac{n(h;r)}{s} ds \leq \int_r^1 \frac{n(h;s)}{s} ds \leq \log \|h\|_\infty.\end{equation}
We will apply this result to the function $h=d \circ \phi^{-1}$, where $d : \Omega \to \mc$ is the holomorphic function from Theorem \ref{prop1}. Note
that by part (iii) of that theorem, every eigenvalue of $L$ in $\Omega'$ corresponds to a zero of $d$, hence to a zero of $h$ in $\phi(\Omega')$, which is a subset of the disk of radius 
$r=r_{\Omega}(\Omega')$ around $0$. Therefore 
\begin{equation}\label{inn1}
\mathcal{N}_L(\Omega') = \# \{ w \in \phi(\Omega') : h(w)=0\} \leq n(h,r_{\Omega}(\Omega')).
\end{equation}
By part (ii)  of Theorem \ref{prop1} and by (\ref{fin}), we have
$$\log \|h\|_\infty \leq  \frac{C_p}{(\epsilon-\alpha_{N+1}(K))^p}\cdot \sum_{j=1}^N\Big( \alpha_{N+1}(K) + \alpha_j(K) \Big)^p,$$
which together with (\ref{jl}) and (\ref{inn1}) implies (i). 

To obtain (ii)  from (i), we distinguish between the cases $0<p<1$ and $p \geq 1$, respectively. If $0<p<1$, we can use the inequality $(a+b)^p\leq a^p+b^p,$ ($a,b\geq 0)$ to obtain
\begin{equation}
  \label{eq:13}
 \sum_{j=1}^N\Big( \alpha_{N+1}(K) + \alpha_j(K) \Big)^p\leq \left( \sum_{j=1}^N \alpha_{N+1}^p(K) + \sum_{j=1}^N\alpha_j^p(K)\right). 
\end{equation}
In case that $p \geq 1$, the Minkowski inequality gives
\begin{small}
\begin{equation}
  \label{eq:14}
  \sum_{j=1}^N\Big( \alpha_{N+1}(K) + \alpha_j(K) \Big)^p  \leq \left[ \left( \sum_{j=1}^N \alpha_{N+1}^p(K) \right)^{1/p}+ \left( \sum_{j=1}^N\alpha_j^p(K)\right)^{1/p} \right]^p. 
\end{equation}
\end{small}
We note that since $j \mapsto \alpha_j(K)$ is non-increasing and $\{\alpha_j(K)\} \in l^p(\mn)$, we have 
$$\sum_{j=1}^N \alpha_{N+1}^p(K)= N\alpha_{N+1}^p(K) \to 0 \quad (N \to \infty) .$$ 
Indeed, we have
$$ (2j) \cdot \alpha_{2j}^p(K) =2 \sum_{m=j+1}^{2j} \alpha_{2j}^p(K) \leq 2\sum_{m=j+1}^{2j} \alpha_m^p(K) \to 0 \quad (j \to \infty),$$
and in a similar manner one can show that $(2j+1)\alpha_{2j+1}^p(K) \to 0$ for $j \to \infty$. Therefore, (\ref{eq:13}) and (\ref{eq:14}) imply that \emph{for all} $p>0$ 
$$\lim_{N\rightarrow \infty}\sum_{j=1}^N\Big( \alpha_{N+1}(K) + \alpha_j(K) \Big)^p\leq \sum_{j=1}^\infty\alpha_j^p(K),$$ 
so that taking $N\rightarrow \infty$ in (i) gives (ii).
\end{proof}

While the above result is very general, applying it to bound the number of eigenvalues in specific sets requires computing the quantity
$r_{\Omega}(\Omega')$, which is generally hard. We will here deal with the special but very important case of estimating the number of eigenvalues outside a disk, that
is we take $\|L_0\|<t<s$ and
$$\Omega=B_{t}^c,\;\;\;\Omega'= B_{s}^c.$$ 
Then $\Omega$ is simply connected, with $\infty \in \Omega$, and for $\lambda \in \Omega$ we have 
\begin{small}
\begin{eqnarray*}
  \|(\lambda-L_0)^{-1}\| = |\lambda|^{-1} \|(\mathds{1}-\lambda^{-1}L_0)^{-1}\| 
\leq |\lambda|^{-1} (1-|\lambda|^{-1}\|L_0\|)^{-1} < (t-\|L_0\|)^{-1},
\end{eqnarray*}
\end{small}
which shows that (\ref{pss}) holds with $\epsilon=t-\|L_0\|$.
The conformal mapping $\phi: \Omega\rightarrow \md$ is given by
$\phi(w)=\frac{t}{w},$ so that $r_{\Omega}(\Omega')=\frac{t}{s}$.
Therefore, denoting the number of eigenvalues (counted with multiplicities) of $L$ in $B_s^c$ by $n_L(s)$,  Theorem \ref{thm1} implies 
that if 
\begin{equation}\label{ulb}\|L_0\|+\alpha_{N+1}(K)<t<s\end{equation} 
then
	\begin{equation}\label{ppp0}
	n_L(s) \leq \frac{C_p  }{\log\left(\frac{s}{t}\right)[t-(\|L_0\|+\alpha_{N+1}(K))]^p}\sum_{j=1}^N\Big( \alpha_{N+1}(K) + \alpha_j(K) \Big)^p .
	\end{equation}
To optimize the bound we should take $t$ satisfying (\ref{ulb}) so as to minimize the right-hand side of (\ref{ppp0}). That is we need to 
maximize the function 
$$f(t)=\log\left(\frac{s}{t}\right)[t-a]^p,$$
where $$a=\|L_0\|+\alpha_{N+1}(K),$$
in the interval $(a,s)$ - note that this function vanishes at the endpoints and is positive in the interior of this interval, so that its
maximum is obtained in the interior. We can find it by setting $f'(t)=0$, where
$$f'(t)=-\frac{1}{t}[t-a]^p+p \log\left(\frac{s}{t}\right)[t-a]^{p-1}.$$

Denoting by $W(x)$ the Lambert W-function $W:[0,\infty)\rightarrow [0,\infty)$, which is defined by 
$$W(x)e^{W(x)}=x,$$
a short computation gives 
$$f'(t^*)=0\;\;\Leftrightarrow\;\; 1-\frac{a}{t^*}=p \log\left(\frac{s}{t^*}\right)\;\;\Leftrightarrow\;\;\frac{a}{pt^*}e^{\frac{a}{pt^*}}=\frac{a}{ps}e^{\frac{1}{p} }$$
$$\;\;\Leftrightarrow\;\;\frac{a}{pt^*}=W\left(\frac{a}{ps}e^{\frac{1}{p} }\right)\;\;\Leftrightarrow\;\;t^*=\frac{a}{pW\left(\frac{a}{ps}e^{\frac{1}{p} }\right)}.$$

Thus
$$\max_{t\in [a,s]}\log\left(\frac{s}{t}\right)[t-a]^p=f(t^*)=\log\left(\frac{ps}{a}W\left(\frac{a}{ps}e^{\frac{1}{p} }\right)\right)\left[\frac{a}{pW\left(\frac{a}{ps}e^{\frac{1}{p} }\right)}-a\right]^p $$ 
$$=\left[\frac{1}{pW\left(\frac{1}{p}e^{\frac{1}{p} }\cdot \frac{a}{s}\right)}-1\right]^{p+1}W\left(\frac{1}{p}e^{\frac{1}{p} }\cdot \frac{a}{s}\right) \cdot \frac{a^p}{s^p}\cdot s^p.$$
Therefore, defining $\Phi_p:(0,1)\rightarrow \mr$ by 
\begin{equation}
  \label{eq:11}
\Phi_p(x)=\frac{\left[W\left(\frac{1}{p}e^{\frac{1}{p} }x\right)\right]^p}{\left[\frac{1}{p}-W\left(\frac{1}{p}e^{\frac{1}{p} }x\right)\right]^{p+1}\cdot x^p},  
\end{equation}
we obtain the following result.
\begin{theorem}\label{thm:ball}
Let $p \in (0,\infty)$ and $s>\|L_0\|$. 
\begin{enumerate}
    \item[(i)] If $N\in\mathbb{N}_0$ is such that $\alpha_{N+1}(K)<s-\|L_0\|$, then
\begin{equation}\label{ppp}
n_L(s) \leq \frac{C_p}{s^p}\cdot \Phi_p\left(\frac{\|L_0\|+\alpha_{N+1}(K)}{s}\right)  \sum_{j=1}^N\Big( \alpha_{N+1}(K) + \alpha_{j}(K) \Big)^p.
  \end{equation}
 \item[(ii)] If $\{\alpha_j(K)\} \in l^p(\mn)$, then
\begin{equation}\label{qqq}
n_L(s) \leq \frac{C_p}{s^p} \cdot  \Phi_p\left(\frac{\|L_0\|}{s}\right)\sum_{j=1}^\infty\alpha_j^p(K).
  \end{equation}
\end{enumerate}
In both cases, $C_p$ is as given in (\ref{eq:9}). 
\end{theorem}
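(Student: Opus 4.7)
The plan is to specialize the general bound of Theorem \ref{thm1}(i) to the exterior-of-disk setting $\Omega = B_t^c$, $\Omega' = B_s^c$ with an auxiliary radius $t \in (\|L_0\| + \alpha_{N+1}(K), s)$, and then choose $t$ so as to minimize the resulting upper bound on $n_L(s)$. The preliminary checks are elementary: $\Omega$ is open, simply connected and contains $\infty$; a Neumann series expansion gives $\|(\lambda-L_0)^{-1}\| < (t - \|L_0\|)^{-1}$ on $\Omega$, so (\ref{pss}) holds with $\epsilon = t - \|L_0\|$; and the M\"obius map $\phi(w)=t/w$ is the required conformal equivalence $\Omega \to \md$ with $\phi(\infty)=0$, whence $r_\Omega(\Omega')=t/s$. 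Plugging these data into Theorem \ref{thm1}(i) produces the one-parameter family of estimates (\ref{ppp0}).

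The key step is then a one-variable optimization. With $a := \|L_0\| + \alpha_{N+1}(K)$, I would maximize $f(t) = \log(s/t)(t-a)^p$ on $(a,s)$; the function vanishes at both endpoints and is strictly positive between, so the supremum is attained at an interior critical point $t^*$. The equation $f'(t^*)=0$ simplifies to $1 - a/t^* = p\log(s/t^*)$, which I would recast in Lambert-W form by exponentiating and multiplying through by $a/(pt^*)$ to obtain
\[
\frac{a}{pt^*}\, e^{a/(pt^*)} \;=\; \frac{a}{ps}\, e^{1/p},
\]
so that $t^* = a/[p\,W((a/(ps))e^{1/p})]$. Substituting back, using $\log(s/t^*)=(1-a/t^*)/p$ and regrouping the factors of $a$ and $s$, reproduces exactly the function $\Phi_p$ defined in (\ref{eq:11}); inserting this maximal value of $f$ into (\ref{ppp0}) yields (\ref{ppp}), completing part (i).

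Part (ii) then follows by the same $N\to\infty$ limiting device that closed out the proof of Theorem \ref{thm1}(ii). Under the assumption $\{\alpha_j(K)\} \in l^p(\mn)$, the inequalities (\ref{eq:13}) and (\ref{eq:14}), together with the fact that $N\alpha_{N+1}^p(K) \to 0$, imply $\sum_{j=1}^N(\alpha_{N+1}(K)+\alpha_j(K))^p \to \sum_{j=1}^\infty \alpha_j^p(K)$, while $\alpha_{N+1}(K)\to 0$ and the continuity of $W$ (hence of $\Phi_p$) on the relevant range yield $\Phi_p((\|L_0\|+\alpha_{N+1}(K))/s) \to \Phi_p(\|L_0\|/s)$. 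Passing to the limit in (\ref{ppp}) gives (\ref{qqq}). The one nontrivial observation in the whole argument is the recognition that the critical-point equation for $f$ admits a closed-form Lambert-W solution; everything else is routine calculus, substitution, or a verbatim reuse of the limiting argument from the previous theorem.
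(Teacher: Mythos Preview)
Your proposal is correct and follows essentially the same approach as the paper: specialize Theorem \ref{thm1}(i) to $\Omega=B_t^c$, $\Omega'=B_s^c$, verify (\ref{pss}) via the Neumann series with $\epsilon=t-\|L_0\|$, identify $r_\Omega(\Omega')=t/s$, obtain the one-parameter bound (\ref{ppp0}), and then maximize $f(t)=\log(s/t)(t-a)^p$ via the Lambert-$W$ critical point to arrive at $\Phi_p$; part (ii) is deduced from (i) by the same $N\to\infty$ limiting argument used for Theorem \ref{thm1}(ii). Your explicit mention of the continuity of $\Phi_p$ (needed to pass $\alpha_{N+1}(K)\to 0$ inside the argument) is a detail the paper leaves implicit, but otherwise the two arguments coincide.
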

Here (ii) is obtained from (i) by taking $N\rightarrow \infty$, as in Theorem \ref{thm1}.

The previous theorem can be regarded as a broad generalization of the classical eigenvalue estimates for compact operators, as considered, e.g., in \cite{koenig} and \cite{pietsch}. 
Indeed, if $L$ is compact, i.e. $L_0=0$, we obtain from (\ref{qqq}) that  
\begin{equation} \label{eq:rst}
n_L(s) \leq \frac{p e C_p}{s^p} \cdot  \sum_{j}\alpha_j^p(L),
  \end{equation}
where we used the fact that, as a calculation shows, $\Phi_p(0)=\lim_{x\rightarrow 0}\Phi_p(x)=pe$. This inequality, 
 up to a constant, recovers the classical results. Estimate (\ref{ppp}) seems to be be new even in case that $L_0=0$. 

Concerning the asymptotic behavior (for $s \to \|L_0\|$) of the right-hand sides of (\ref{ppp}) and (\ref{qqq}), one can show that $\Phi_p(x) \sim (1-x)^{-(p+1)}$ for $x \to 1^-$, which, for instance,  in the summable case implies that  
\begin{equation}
  \label{eq:16}
  n_L(s) = O\left( \frac s {(s-\|L_0\|)^{p+1}} \right),  \text{ as } s \to \|L_0\|.
\end{equation}
The following corollary makes (\ref{eq:16}) more precise, and gives bounds on $n_L(s)$ which do not involve the function $\Phi_p$ and which are only slightly weaker than those of Theorem \ref{thm:ball}.
\begin{corollary}\label{corrr}
Let $p \in (0,\infty)$ and $s>\|L_0\|$. 
\begin{enumerate}
    \item[(i)] If $N\in\mathbb{N}_0$ is such that $\alpha_{N+1}(K)<s-\|L_0\|$, then
  \begin{small}
\begin{equation}
n_L(s) \leq \frac{C_p(p+1)^{p+1}}{p^p}\cdot \frac{s}{[s-(\|L_0\|+\alpha_{N+1}(K))]^{p+1}} \sum_{j=1}^N\Big( \alpha_{N+1}(K) + \alpha_{j}(K) \Big)^p.
  \end{equation}
  \end{small}
 \item[(ii)] If $\{\alpha_j(K)\} \in l^p(\mn)$, then
\begin{equation}\label{rrr}
n_L(s) \leq \frac{C_p(p+1)^{p+1}}{p^p}\cdot \frac{s}{(s-\|L_0\|)^{p+1}} \sum_{j=1}^\infty \alpha_j^p(K).
  \end{equation}
\end{enumerate}
In both cases, $C_p$ is as defined in (\ref{eq:9}).
\end{corollary}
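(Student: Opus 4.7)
The plan is to derive the corollary directly from inequality (\ref{ppp0}) by making an explicit, suboptimal but explicit choice of the auxiliary parameter $t$, thereby bypassing the Lambert $W$ optimization used in Theorem \ref{thm:ball}. Write $a:=\|L_0\|+\alpha_{N+1}(K)$. The key elementary observation is $\log(y)\leq y-1$ for $y>0$; applied to $y=t/s$ this gives $\log(s/t)\geq (s-t)/s$ for $t\in(0,s]$. Substituting this lower bound into the denominator of (\ref{ppp0}) yields the cruder but more tractable estimate
\[
 n_L(s)\;\leq\; \frac{C_p\, s}{(s-t)(t-a)^p}\sum_{j=1}^N\Big(\alpha_{N+1}(K)+\alpha_j(K)\Big)^p,
\]
valid for every $t\in(a,s)$. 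The virtue of this simplification is that the function $g(t):=(s-t)(t-a)^p$ admits an explicit, closed-form maximizer.

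Next I optimize $g$ over $(a,s)$ by standard calculus. Setting $g'(t)=0$ gives the unique critical point $t^\ast=(a+ps)/(p+1)$, at which $s-t^\ast=(s-a)/(p+1)$ and $t^\ast-a=p(s-a)/(p+1)$, so that
\[
 g(t^\ast)=\frac{s-a}{p+1}\cdot\left(\frac{p(s-a)}{p+1}\right)^p=\frac{p^p(s-a)^{p+1}}{(p+1)^{p+1}}.
\]
Since $t^\ast$ visibly lies strictly between $a$ and $s$, plugging $t=t^\ast$ into the displayed inequality above yields precisely the bound stated in part (i). Part (ii) then follows by letting $N\to\infty$, exactly as in the passage from (i) to (ii) in Theorem \ref{thm1}: one has $\alpha_{N+1}(K)\to 0$, so $a\to\|L_0\|$, and the calculation in the proof of that theorem already shows that $\sum_{j=1}^N(\alpha_{N+1}(K)+\alpha_j(K))^p\to \sum_{j=1}^\infty\alpha_j^p(K)$ under the $p$-summability hypothesis.

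I do not anticipate any serious obstacle; this is essentially a one-variable optimization coupled with a single elementary log inequality. The only point worth being attentive to is that the inequality $\log(s/t)\geq (s-t)/s$ is sharp to first order at $t=s$, which is precisely the regime that governs $n_L(s)$ when $s$ is close to $\|L_0\|$. This is why the crude substitution nevertheless reproduces the correct exponent $p+1$ on $(s-a)$ in the denominator, matching the asymptotic $\Phi_p(x)\sim (1-x)^{-(p+1)}$ mentioned before the corollary; any weaker substitute for $\log(s/t)$ would degrade this exponent and the result would be strictly weaker than Theorem \ref{thm:ball}.
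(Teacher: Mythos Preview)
Your argument is correct and considerably more elementary than the paper's own proof. The paper derives the corollary from Theorem~\ref{thm:ball} by establishing the pointwise bound
\[
\Phi_p(x)\leq \frac{(p+1)^{p+1}}{p^p}\,\frac{1}{(1-x)^{p+1}},\qquad 0<x<1,
\]
for the Lambert~$W$ expression~(\ref{eq:11}); this is done by showing that $h(x)=(1-x)^{p+1}\Phi_p(x)$ is monotonically increasing on $(0,1)$ (via the identity $W'(x)=W(x)/[x(W(x)+1)]$) and then computing $\lim_{x\to 1^-}h(x)$ with l'H\^opital. You instead return to the pre-optimization inequality~(\ref{ppp0}), replace $\log(s/t)$ by its tangent line $(s-t)/s$, and carry out a one-line polynomial maximization in $t$. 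The two routes are in fact equivalent: since $\max_{t\in(a,s)}\log(s/t)(t-a)^p=s^p/\Phi_p(a/s)$, your lower bound $g(t^\ast)/s$ on this maximum is exactly the paper's upper bound on $\Phi_p$ in disguise. What your approach buys is that it sidesteps the Lambert~$W$ function entirely and needs no special-function calculus; what the paper's approach buys is that it records the sharper inequality on $\Phi_p$ itself, which is of independent interest given that Theorem~\ref{thm:ball} is stated in terms of $\Phi_p$.
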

 Note that (\ref{rrr}) is equivalent to the inequality (\ref{be}) presented in the Introduction (setting $C(p)=C_p\frac{(p+1)^{p+1}}{p^p}$).
\begin{proof}[Proof of Corollary \ref{corrr}] 
  The corollary is a direct consequence of Theorem \ref{thm:ball} and the estimate
  \begin{equation}
    \label{eq:15}
    \Phi_p(x) \leq \frac{(p+1)^{p+1}}{p^p}\cdot \frac{1}{(1-x)^{p+1}}, \quad 0<x<1.
  \end{equation}
To prove the last estimate, we define  
$$ g : (0,1) \to (0,1/p), \quad g(x)=W\left( \frac 1 p e^\frac 1 p x\right)$$ and 
$$h(x):= (1-x)^{p+1}\Phi_p(x)= p^{p+1} \left( \frac{g(x)}{x} \right)^p \left(\frac{1-x} {1-pg(x)} \right)^{p+1}, \quad x \in (0,1),$$
see (\ref{eq:11}). We show below that $h$ is monotonically increasing in $(0,1)$, so in particular
\begin{equation}
\label{eq:17}
h(x) \leq \lim_{y \to 1^-} h(y) = \frac{(p+1)^{p+1}}{p^p},
\end{equation}
where in the computation of the limit we used l'H\^opital's rule and the fact that $g(1)=1/p$. The validity of (\ref{eq:15}) is an immediate consequence of estimate (\ref{eq:17}).

To show that $h$ is monotonically increasing, we use the fact that $ W'(x)= \frac 1 x \cdot \frac {W(x)}{W(x)+1}$,
and so  
$$ g'(x)= \frac 1 x \cdot \frac {g(x)}{g(x)+1},$$
and differentiate $h(x)$, obtaining 
$$h'(x)= p^{p+1}\cdot \frac{(1-x)^p g(x)^p}{x^{p+1}(1-pg(x))^{p+2}}\cdot \Big( p (x+p) g(x)-(p+1)x \Big).$$
Thus we have $h'(x)>0$ for all $x \in (0,1)$ if and only if $$f(x):= \frac{p}{p+1}(x+p)-\frac x {g(x)} > 0$$ for all $x \in (0,1)$. But $\lim_{x \to 1^-}f(x)=0$, and $f$ is strictly monotonically decreasing since $f'(x)= \frac p {p+1} - \frac 1 {g(x)+1} < 0$ for all $x \in (0,1)$. Therefore $f(x) > 0$ and hence $h'(x) > 0$ for all $x \in (0,1)$.
\end{proof}

\section{Remarks on the sharpness of the results}

We now express our results in terms of bounds on sums of powers of eigenvalues of $L=L_0+K$ outside the disk of radius $\|L_0\|$. Besides the 
intrinsic interest in such a formulation, it will be convenient for discussing issues related to the sharpness of the results obtained. 

By integration by parts one has 
\begin{equation}
\label{eq:1}
q \int_{\|L_0\|}^\infty n_L(s) (s-\|L_0\|)^{q-1} ds = \sum_{\lambda \in \sigma_d(L), |\lambda|>\|L_0\|}  (|\lambda|-\|L_0\|)^q, \quad q>0,  
\end{equation}
where in the sum each eigenvalue is counted according to its algebraic multiplicity. 

Using (\ref{rrr}), and the fact that $n_L(s)=0$ for $s>\|L_0\|+\|K\|\geq \|L\|$, we obtain,  
\begin{eqnarray}&&\label{dh} \sum_{\lambda \in \sigma_d(L), |\lambda|>\|L_0\|}  (|\lambda|-\|L_0\|)^q  = q \int_{\|L_0\|}^\infty n_L(s) (s-\|L_0\|)^{q-1} ds\\
&\leq& q \frac{C_p(p+1)^{p+1}}{p^p} \sum_{j}\alpha_j^p(K)\int_{\|L_0\|}^{\|L_0\|+\|K\|}  \frac{s }{(s-\|L_0\|)^{p+2-q}}   ds\nonumber\\
&=&q \frac{C_p(p+1)^{p+1}}{p^p}\left[\frac{1}{q-p-1}\|L_0\|+ \frac{1}{q-p} \|K\| \right]\|K\|^{q-p-1}\cdot\sum_{j}\alpha_j^p(K),\nonumber
\end{eqnarray}
where the finiteness of the integral, hence the validity of the inequality, requires $q>p+1$ if $L_0\neq 0$, and $q>p$ if $L_0=0$.

We thus have the following facts, where we distinguish between the cases $L_0=0$ (which implies that $L$ is compact) and $L_0\neq 0$. 
\begin{corollary}\label{cor1}
Let $L_0, K \in \bdd(X)$ and $L:=L_0+K$. 
\begin{enumerate} 
    \item[(i)] If $L_0 \neq 0$, then for any $p>0$,  $q>p+1$
\begin{equation}
  \label{eq:3}
 \{\alpha_j(K)\} \in l^p(\mn)\;\;\; \Rightarrow \sum_{\lambda \in \sigma_d(L), |\lambda|>\|L_0\|}  (|\lambda|-\|L_0\|)^{q}<\infty.
\end{equation}
    \item[(ii)] If $L_0 = 0$, then for every $p>0$, $q>p$  
\begin{equation}
  \label{eq:33}
 \{\alpha_j(K)\} \in l^p(\mn) \;\;\;\Rightarrow \sum_{\lambda \in \sigma_d(L)}  |\lambda|^{q} <\infty .
\end{equation}
\end{enumerate}
\end{corollary}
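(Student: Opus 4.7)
The plan is to derive the corollary as essentially a direct integration-by-parts consequence of Corollary \ref{corrr}, which I am entitled to use since it is proved earlier in the excerpt.

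First I would write the ``sum equals integral'' identity
\[
\sum_{\lambda \in \sigma_d(L),\, |\lambda|>\|L_0\|} (|\lambda|-\|L_0\|)^q \;=\; q\int_{\|L_0\|}^{\infty} n_L(s)\,(s-\|L_0\|)^{q-1}\,ds,
\]
which is (\ref{eq:1}) and follows from Fubini / integration by parts, using that for each eigenvalue $\lambda$ with $|\lambda|>\|L_0\|$ the quantity $n_L(s)$ picks up its multiplicity precisely for $s\in(\|L_0\|,|\lambda|)$. At this point the sum in the corollary is finite if and only if the integral on the right converges.

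Next I would use the two facts that control this integral. The upper cutoff is trivial: since $\|L\|\le \|L_0\|+\|K\|$, we have $n_L(s)=0$ for $s>\|L_0\|+\|K\|$, so the range of integration is actually the bounded interval $[\|L_0\|,\|L_0\|+\|K\|]$. Near the lower endpoint I would plug in the bound from Corollary \ref{corrr}(ii),
\[
n_L(s) \;\le\; \frac{C_p(p+1)^{p+1}}{p^p}\cdot \frac{s}{(s-\|L_0\|)^{p+1}}\sum_{j}\alpha_j^p(K),
\]
valid whenever $\{\alpha_j(K)\}\in\ell^p(\mn)$. Substituting this gives, up to a constant multiple of $\sum_j\alpha_j^p(K)$, the integral
\[
\int_{\|L_0\|}^{\|L_0\|+\|K\|} \frac{s\,(s-\|L_0\|)^{q-1}}{(s-\|L_0\|)^{p+1}}\,ds \;=\; \int_{\|L_0\|}^{\|L_0\|+\|K\|} s\,(s-\|L_0\|)^{q-p-2}\,ds .
\]

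The last step is to determine when this integral converges, and this is the only place the distinction between (i) and (ii) enters. If $L_0\neq 0$, the factor $s$ is bounded below by $\|L_0\|>0$ on the interval, so integrability at $s=\|L_0\|$ reduces to $(s-\|L_0\|)^{q-p-2}$ being integrable there, which requires $q-p-2>-1$, i.e.\ $q>p+1$. If $L_0=0$, then $s=s-\|L_0\|$ and the integrand collapses to $s^{q-p-1}$, integrable near $0$ precisely when $q>p$. In both cases the integral is finite under the stated hypothesis, proving (\ref{eq:3}) and (\ref{eq:33}) respectively. I do not anticipate a real obstacle: everything is a bookkeeping exercise once Corollary \ref{corrr} is in hand, and the sharpness of the $q>p+1$ threshold (versus $q>p$) is exactly explained by the extra factor of $s$ in the numerator of the $n_L(s)$ bound, which forces a slower decay of the integrand near $s=\|L_0\|$ as soon as $\|L_0\|>0$.
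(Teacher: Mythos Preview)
Your proposal is correct and follows essentially the same route as the paper: the paper also derives the corollary by writing the sum as $q\int_{\|L_0\|}^{\infty} n_L(s)(s-\|L_0\|)^{q-1}\,ds$, truncating the integral at $\|L_0\|+\|K\|$ using $n_L(s)=0$ beyond that point, inserting the bound from Corollary~\ref{corrr}(ii), and then checking that the resulting integral $\int_{\|L_0\|}^{\|L_0\|+\|K\|} s\,(s-\|L_0\|)^{q-p-2}\,ds$ converges precisely when $q>p+1$ (respectively $q>p$ if $L_0=0$). One small remark: for the finiteness argument in case (i) what you actually need is that $s$ is bounded \emph{above} on the interval (which is clear), not below; the lower bound $s\ge\|L_0\|>0$ is what distinguishes this case from (ii) and shows the threshold $q>p+1$ cannot be relaxed for this particular integral.
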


Noting the difference in the condition on the exponent $q$ between the cases $L_0\neq 0$ ($q>p+1$) and the case $L_0=0$ ($q>p$), it is natural 
to ask whether this reflects a real difference in the possible distribution of eigenvalues in the two cases, or a limitation of our 
methods of proof. That is, we seek to determine to what extent the results we have obtained are sharp.  We therefore ask:  
\begin{enumerate} 
    \item What is the infimum $q_{B}(p)$ $[q_{{B}}^0(p)]$ of all exponents $q$ such that the implication (\ref{eq:3}) [(\ref{eq:33})] is valid for all Banach spaces $X$, all $L_0 \in \bdd(X)$ and all compact $K \in \bdd(X)$ with $\{\alpha_j(K)\}\in\l^p(\mn)$?
  \item Is the above infimum a minimum?
\end{enumerate}
Let us first consider the case $L_0=0$: Here, it is well-known that for all $p>0$ we have $q_{{B}}^{(0)}(p)=p$ and that the infimum is a minimum, as follows from K\"onig's result (\ref{eq:7}) above. So we see that for this case we almost recover the optimal exponent. 
 
What about the case $L_0\neq 0$? Our results imply 
$$ \max(1,p) \leq  q_{{B}}(p) \leq  p+1,$$
where the lower bound follows from (\ref{hilbert}) below, while the upper bound follows from Corollary \ref{cor1}.  Otherwise we do not know 
much about the value $q_{{B}}(p)$, nor whether the infimum is a minimum. 

Let us note that if we restrict ourselves to Hilbert spaces, and define the constant $q_{H}(p)$ analogously, then it is known that
\begin{equation}
\label{hilbert}
 q_{{H}}(p) = \max(1,p)
\end{equation}
and that the infimum is again a minimum, as follows from results in \cite{Hansmann11} and \cite{MR3033958}. Thus for the case of general $L_0,L$ on a Hilbert space the situation is the same as for $L_0=0$ on a general Banach space as long as $p\geq 1$, but quite different for $p$ smaller than one.

The question is thus whether the fact that the results concerning the exponent $q_{B}(p)$ that we obtain are weaker than the known
results for Hilbert space operators is due to non-sharpness of our results, or rather to a real difference between what can happen in Hilbert spaces
and in general Banach spaces, respectively. If the latter is the case, then this must be demonstrated by constructing appropriate examples. 
While we do not have an answer to the above question, we do have an example which shows that eigenvalues of perturbations can  behave
in a `worse' way in general Banach spaces than in Hilbert spaces.
 Indeed, below we will construct an example with $X=l^1(\mn)$, where $L-L_0$ is of {\it{finite rank}}, and where
$$ \sum_{\lambda \in \sigma_d(L), |\lambda|>\|L_0\|}  (|\lambda|-\|L_0\|)=\infty.$$
Note that for a finite rank perturbation on a {\it{Hilbert}} space the above sum will always be finite, as follows from the considerations above and the fact that the approximation numbers of finite rank operators are $p$-summable for every $p>0$.

\begin{example}
It is well-known (see, e.g., \cite{MR0307393}) that there exist holomorphic functions $h$ on the unit disk, with uniformly bounded Taylor coefficients, such that
\begin{equation}
  \label{eq:10}
\sum_{w \in \md, h(w)=0} (1-|w|) = \infty,  
\end{equation}
where each zero is counted according to its order. Let us fix such a (normalized) function
$$ h(w)=1-\sum_{k=1}^\infty b_k w^k,$$
with $\{ b_k\} \in l^\infty(\mn)$. Now we choose $X=l^1(\mn)$ and let $L_0$ denote the shift operator on $l^1(\mn)$, i.e. 
$$ L_0 \delta_n= \delta_{n+1}, \quad n \in \mn,$$
where $\{\delta_n\}$ denotes the canonical Schauder basis of $l^1(\mn)$.
Clearly, $\|L_0\|=1$. Next, we define a rank one operator $K$ on $l^1(\mn)$ by
$$ Kf = \langle f, b \rangle \delta_1,$$
where $\langle ., . \rangle$ denotes the dual pairing between $l^1$ and $l^\infty$, and we set $L=L_0+K$. For $|\lambda| > 1$ we then have that $\lambda \in \sigma_d(L)$ iff
$$ {\det}_1(\mathds{1}-K(\lambda-L_0)^{-1})=0.$$
It is not difficult to see that, setting $w=\lambda^{-1}$,
\begin{eqnarray*}
{\det}_1(\mathds{1}-K(\lambda-L_0)^{-1}) &=& 1- \langle (\lambda-L_0)^{-1}\delta_1,b \rangle \\
&=& 1- w \sum_{k=0}^\infty \langle L_0^k\delta_1,b \rangle w^k \\
&=& 1- \sum_{k=1}^\infty b_{k} w^k = h(w).
\end{eqnarray*}
From (\ref{eq:10}) we thus obtain that
$$ \sum_{\lambda \in \sigma_d(L), |\lambda|>\|L_0\|} (|\lambda|-\|L_0\|)= \sum_{w \in \md, h(w)=0} \frac{1-|w|}{|w|} = \infty.$$
\end{example} 
We thus conclude this article with a number of open problems: Is it true that for general Banach spaces we have that $q_{{B}}(p)=p$ for $p \geq 1$ (but maybe it will not be a minimum)? Or do we have that $q_{{B}}(p)$ is strictly larger than $p$? Is our upper bound $p+1$ actually equal to $q_{{B}}(p)$? In addition, one might also ask about the optimal exponents for more specific classes of Banach spaces (only the case of Hilbert spaces being known). These are, we believe, intriguing questions for further investigation.


\begin{thebibliography}{10}

\bibitem{MR925418}
R.~Bhatia.
\newblock {\em Perturbation bounds for matrix eigenvalues}, volume 162 of {\em
  Pitman Research Notes in Mathematics Series}.
\newblock Longman Scientific \& Technical, Harlow; John Wiley \& Sons, Inc.,
  New York, 1987.

\bibitem{MR503901}
J.~B. Conway.
\newblock {\em Functions of one complex variable}, volume~11 of {\em Graduate
  Texts in Mathematics}.
\newblock Springer-Verlag, New York-Berlin, second edition, 1978.

\bibitem{b_Davies}
E.~B. Davies.
\newblock {\em Linear operators and their spectra}, volume 106 of {\em
  Cambridge Studies in Advanced Mathematics}.
\newblock Cambridge University Press, Cambridge, 2007.

\bibitem{MR2559715}
M.~Demuth, M.~Hansmann, and G.~Katriel.
\newblock On the discrete spectrum of non-selfadjoint operators.
\newblock {\em J. Funct. Anal.}, 257(9):2742--2759, 2009.

\bibitem{demuth}
M.~Demuth, M.~Hansmann, and G.~Katriel.
\newblock Eigenvalues of non-selfadjoint operators: a comparison of two
  approaches.
\newblock In {\em Mathematical physics, spectral theory and stochastic
  analysis}, volume 232 of {\em Oper. Theory Adv. Appl.}, pages 107--163.
  Birkh\"auser/Springer Basel AG, Basel, 2013.

\bibitem{MR1009163}
N.~Dunford and J.~T. Schwartz.
\newblock {\em Linear operators. {P}art {II}}.
\newblock Wiley Classics Library. John Wiley \& Sons, Inc., New York, 1988.

\bibitem{perenflo}
P.~Enflo.
\newblock A counterexample to the approximation problem in {B}anach spaces.
\newblock {\em Acta Math.}, 130:309--317, 1973.

\bibitem{MR2391269}
M.~I. Gil'.
\newblock Upper and lower bounds for regularized determinants.
\newblock {\em JIPAM. J. Inequal. Pure Appl. Math.}, 9(1):Article 2, 6, 2008.

\bibitem{gohberg1}
I.~Gohberg, S.~Goldberg, and M.~A. Kaashoek.
\newblock {\em Classes of linear operators. {V}ol. {I}}, volume~49 of {\em
  Operator Theory: Advances and Applications}.
\newblock Birkh\"auser Verlag, Basel, 1990.

\bibitem{MR1744872}
I.~Gohberg, S.~Goldberg, and N.~Krupnik.
\newblock {\em Traces and determinants of linear operators}, volume 116 of {\em
  Operator Theory: Advances and Applications}.
\newblock Birkh\"auser Verlag, Basel, 2000.

\bibitem{gohberg2}
I.~C. Gohberg and M.~G. Krein.
\newblock {\em Introduction to the theory of linear nonselfadjoint operators}.
\newblock American Mathematical Society, Providence, R.I., 1969.

\bibitem{Hansmann11}
M.~Hansmann.
\newblock An eigenvalue estimate and its application to non-selfadjoint
  {J}acobi and {S}chr\"odinger operators.
\newblock {\em Lett. Math. Phys.}, 98(1):79--95, 2011.

\bibitem{MR3054310}
M.~Hansmann.
\newblock Variation of discrete spectra for non-selfadjoint perturbations of
  selfadjoint operators.
\newblock {\em Integral Equations Operator Theory}, 76(2):163--178, 2013.

\bibitem{MR3033958}
M.~Hansmann and G.~Katriel.
\newblock From spectral theory to bounds on zeros of holomorphic functions.
\newblock {\em Bull. Lond. Math. Soc.}, 45(1):103--110, 2013.

\bibitem{MR0417827}
J.~S. Howland.
\newblock Analyticity of determinants of operators on a {B}anach space.
\newblock {\em Proc. Amer. Math. Soc.}, 28:177--180, 1971.

\bibitem{MR0482266}
H.~K{\"o}nig.
\newblock Interpolation of operator ideals with an application to eigenvalue
  distribution problems.
\newblock {\em Math. Ann.}, 233(1):35--48, 1978.

\bibitem{MR568991}
H.~K{\"o}nig.
\newblock A {F}redholm determinant theory for {$p$}-summing maps in {B}anach
  spaces.
\newblock {\em Math. Ann.}, 247(3):255--274, 1980.

\bibitem{koenig}
H.~K{\"o}nig.
\newblock {\em Eigenvalue distribution of compact operators}, volume~16 of {\em
  Operator Theory: Advances and Applications}.
\newblock Birkh\"auser Verlag, Basel, 1986.

\bibitem{MR2662459}
Y.~Latushkin and A.~Sukhtayev.
\newblock The algebraic multiplicity of eigenvalues and the {E}vans function
  revisited.
\newblock {\em Math. Model. Nat. Phenom.}, 5(4):269--292, 2010.

\bibitem{MR0298460}
A.~S. Markus and V.~I. Macaev.
\newblock Analogues of the {W}eyl inequalities, and trace theorems in a
  {B}anach space.
\newblock {\em Mat. Sb. (N.S.)}, 86(128):299--313, 1971.

\bibitem{MR0307393}
A.~C. Offord.
\newblock The distribution of the values of a random function in the unit disk.
\newblock {\em Studia Math.}, 41:71--106, 1972.

\bibitem{pietsch}
A.~Pietsch.
\newblock {\em Eigenvalues and {$s$}-numbers}, volume~13 of {\em Cambridge
  Studies in Advanced Mathematics}.
\newblock Cambridge University Press, Cambridge, 1987.

\bibitem{rudin}
W.~Rudin.
\newblock {\em Real and complex analysis}.
\newblock McGraw-Hill Book Co., New York, third edition, 1987.

\bibitem{MR0482328}
B.~Simon.
\newblock Notes on infinite determinants of {H}ilbert space operators.
\newblock {\em Advances in Math.}, 24(3):244--273, 1977.

\bibitem{simon}
B.~Simon.
\newblock {\em Trace ideals and their applications}.
\newblock American Mathematical Society, Providence, RI, second edition, 2005.

\bibitem{MR2155029}
L.~N. Trefethen and M.~Embree.
\newblock {\em Spectra and pseudospectra}.
\newblock Princeton University Press, Princeton, NJ, 2005.
\newblock The behavior of nonnormal matrices and operators.

\bibitem{MR2463978}
C.~Tretter.
\newblock {\em Spectral theory of block operator matrices and applications}.
\newblock Imperial College Press, London, 2008.

\end{thebibliography}
\end{document}